\newcounter{braid}
\newcounter{strands}
\DeclareMathAlphabet{\bsf}{OT1}{cmss}{bx}{n}
\def\cross{%
  \@ifnextchar^{\message{Got sup}\cross@sup}{\cross@sub}}
\def\cross@sup^#1_#2{\render@cross{#2}{#1}}
\def\cross@sub_#1{\@ifnextchar^{\cross@@sub{#1}}{\render@cross{#1}{1}}}
\def\cross@@sub#1^#2{\render@cross{#1}{#2}}
\def\render@cross#1#2{
  \def\strand{#1}
  \def\crossing{#2}
  \pgfmathsetmacro{\cross@y}{-\value{braid}*\braid@h}
  \pgfmathtruncatemacro{\nextstrand}{#1+1}
  \foreach \thread in {1,...,\value{strands}}
  {
    \pgfmathsetmacro{\strand@x}{\thread * \braid@w}
    \ifnum\thread=\strand
    \pgfmathsetmacro{\over@x}{\strand * \braid@w + .5*(1 - \crossing) * \braid@w}
    \pgfmathsetmacro{\under@x}{\strand * \braid@w + .5*(1 + \crossing) * \braid@w}
    \draw[braid] \pgfkeysvalueof{/tikz/braid start} +(\under@x pt,\cross@y pt) to[out=-90,in=90] +(\over@x pt,\cross@y pt -\braid@h);
    \draw[braid] \pgfkeysvalueof{/tikz/braid start} +(\over@x pt,\cross@y pt) to[out=-90,in=90] +(\under@x pt,\cross@y pt -\braid@h);
    \else
    \ifnum\thread=\nextstrand
    \else
     \draw[braid] \pgfkeysvalueof{/tikz/braid start} ++(\strand@x pt,\cross@y pt) -- ++(0,-\braid@h);
    \fi
   \fi
  }
  \stepcounter{braid}
}
\tikzset{braid/.style={double=\pgfkeysvalueof{/tikz/braid colour},double distance=1pt,line width=2pt,white}}
\newcommand{\braid}[2][]{%
  \begingroup
  \pgfkeys{/tikz/strands=2}
  \tikzset{#1}
  \pgfkeysgetvalue{/tikz/braid width}{\braid@w}
  \pgfkeysgetvalue{/tikz/braid height}{\braid@h}
  \setcounter{braid}{0}
  \let\sigma=\cross
  #2
  \endgroup
}
\newtheorem{theorem}{Theorem}
\newtheorem{proposition}[theorem]{Proposition}
\newtheorem{lemma}[theorem]{Lemma}
\def\Z{\mathbb{Z}}
\def\Q{\mathbb{Q}}
\def\N{\mathbb{N}}
\def\qed{\hfill$\square$\medskip}
\def\Zpk{\mathbb{Z}/p^{k}}
\def\Zpk1{\mathbb{Z}/p^{k-1}}
\newcommand{\rref}[1]{(\ref{#1})}
\newcommand{\beg}[2]{\begin{equation}\label{#1}#2\end{equation}}
\def\r{\rightarrow}
\def\sl2{\widetilde{SL_{2}(\Z)}}
\title[Tate cohomology of connective k-theory]{Tate cohomology of connected k-theory for
elementary abelian groups revisited}
\author{Po Hu, Igor Kriz and Petr Somberg}
\thanks{The authors acknowledge support by grant GA\,CR P201/12/G028.
Kriz also acknowledges the support of a Simons Collaboration Grant.}
\begin{document}
\maketitle

\begin{abstract}
Tate cohomology (as well as Borel homology and cohomology)
of connective K-theory for $G=(\Z/2)^n$ was completely calculated by
Bruner and Greenlees \cite{bg}. In this note, we essentially redo the calculation
by a different, more elementary method, and we extend it to $p>2$ prime. We also
identify the resulting spectra, which are products of Eilenberg-Mac Lane spectra, and finitely many finite
Postnikov towers. For $p=2$,
we also reconcile our answer completely with the result of \cite{bg}, which is in a different
form, and hence the comparison involves some non-trivial
combinatorics. 
\end{abstract}

\vspace{3mm}
\section{Introduction}

Tate cohomology of finite groups was first discovered in number theory, where it was noticed that
in some statements related to duality, the $0$'th cohomology has to be ``corrected" by 
factoring out the image of the norm map from $0$'th homology (see for example Serre \cite{serre}). 
Considering also the kernel of
the norm map, finite group homology and cohomology, with the $0$'th groups ``corrected", fit into
one distinguished theory which became known as Tate cohomology.

\vspace{3mm}

In stable homotopy theory, the natural question was to find an appropriate definition of Tate cohomology
for generalized cohomology theories with a finite (or compact Lie) group action, i.e. equivariant 
spectra \cite{lms}. This was accomplished in the fundamental paper \cite{gm} by Greenlees and May. 
Generalized Tate cohomology 
has since become an important tool in stable homotopy theory, notably in \cite{gmc,real,hhr}. 
Generalized, in contrast with ordinary, Tate cohomology is a key tool for understanding completion 
theorems, which relate, by completion, the coefficients of an equivariant cohomology theory
to its Borel cohomology. The first known completion theorem was proved by Atiyah and Segal for K-theory \cite{as}. 
Other important cases discovered since
then include the Segal conjecture, proved by G.Carlsson \cite{carlsson}, and a
completion theorem for complex cobordism by Greenlees and May \cite{gmc}. This aspect
of Tate cohomology is not visible 
at all on the original concept of ordinary Tate cohomology simply because ordinary cohomology does
not satisfy a completion theorem.

\vspace{3mm}
The role of generalized Tate cohomology in completion theorems stems from the fact that it is, 
like in the ordinary case,
the cofiber of a norm map \cite{gm} from the Borel homology to the Borel cohomology of a given
equivariant spectrum. All these constructions forget much of the equivariant 
structure, in the sense that a morphism of equivariant spectra, which is an equivalence non-equivariantly,
induces an isomorphism on them. This aspect of Tate cohomology
was, more recently, used in a different context by Nikolaus and 
Scholze \cite{scholze} to simplify much of the theory of topological cyclic homology.

\vspace{3mm}
In the case of K-theory, the Atiyah-Segal completion theorem \cite{as} was reinteterpreted by 
Greenlees and May \cite{gm} to say that the Tate cohomology of K-theory is rational. In connection
with this fact they asked about the structure of Tate cohomology of connective K-theory (in
the complex and orthogonal cases), and
proposed an answer in terms of a certain part of K-theory Tate cohomology. In particular, they
conjectured that it is a wedge sum of Eilenberg-MacLane spectra \cite{dm}, Conjecture 13.4.

\vspace{3mm}
Bruner and Greenlees \cite{bg} computed Borel k-homology, Borel k-coho-mology
and Tate k-cohomology for the group $G=(\Z/2)^r$ for $r>1$. They disproved the Greenlees-May conjecture, showing,
however, that a closely related statement holds for $G=(\Z/2)^r$. In particular, there is a large summand which
is a sum of suspensions of $H\Z/2$,
which is essentially dual in k-Borel homology and cohomology, and both duals are present in Tate cohomology.
The computation of \cite{bg} is non-trivial, using techniques of local algebra, as well as the Adams spectral
sequence. The non-torsion part is also closely related to what was conjectured by Greenlees and May in that
it is a sum of copies of $H\Z_2$, and finitely many finite Postnikov towers.

\vspace{3mm}
The main purpose of this note is to exhibit a different approach to these calculations, using more direct
and elementary methods. We also extend the calculations to $p>2$.
Essentially, we directly use induction on $r$,
smashing (or applying the function spectrum) to one additional copy of $B\Z/p_+$ at a time, 
exploiting the fact that most of the $k$-modules involved have $Ext$-dimension $\leq 1$
(noticing that having this on coefficients implies the same statement in spectral algebra).
Our method gives a closed formulas for the Poincare series of
the torsion part of the calculation. For $p=2$, we prove that this is in fact the same answer as the result
of \cite{bg}, which is a non-trivial combinatorial calculation.

\vspace{3mm}
The present paper is organized as follows. We treat the $p=2$ case first, since it is simpler, and we know it 
first. In Section \ref{sb}, we compute the $k$-Borel homology and cohomology for $p=2$, essentially
using homological algebra, which in this case nicely translates to homological algebra of $k$-modules. In
Section \ref{state}, we compute the norm map for $p=2$, and identify the $k$-Tate cohomology for $p=2$
in Theorem \ref{ttt}. In Section \ref{sp}, we explain what needs to be added to those constructions
for handling the case $p>2$. The computation of $k$-Tate cohomology is presented in Theorem \ref{tptt}.
In Section \ref{sbg}, we give the explicit algebraic comparison of our result for $p=2$ with the result of
\cite{bg}.

\vspace{3mm}

\section{Borel $k$-homology and cohomology of $(\Z/2)^i$}\label{sb}

Let $k$ denote connective complex K-theory $E_\infty$ ring spectrum. We have $k_*=\Z[\beta]$ where $\beta$ is the
Bott element in degree $2$. Consider the $\Z[\beta]$-modules 
$$N=\Z[\frac{\beta}{2}],\;M=N/\Z[\beta].$$
We shall write $Tor$ for $Tor^{\Z[\beta]}$. For any graded $\Z[\beta]$-module $W$, and a power series 
$p(w)=\sum_n a_nw^n\in \N_0[[w,w^{-1}]]$, we shall denote by $Wp(w)$ a sum of copies of shifts of $W$ 
where there are $a_n$ copies of $W[n]$ (i.e. of $W$ shifted up by $n\in \Z$).

\begin{lemma}\label{ltor1}
\begin{enumerate}
\item\label{ltori1}
For every $\Z[\beta]$-module $W$, we have
$$Tor_{\geq 2}(M,W)=Tor_{\geq 2}(N,W)=0.$$
\item\label{ltori2}
We have
$$N\otimes_{\Z[\beta]}N\cong N\oplus \Z/2\frac{w^2}{(1-w^2)^2},\; Tor_1(N,N)=0$$
(Note that the graded module $\Z/2$ must have trivial action of $\beta$.)
\item\label{ltori3}
We have
$$N\otimes_{\Z[\beta]}M\cong \Z/2\frac{w^2}{(1-w^2)^2}, \;Tor_1(M,N)=0.$$
\item\label{ltori4}
We have
$$M\otimes_{\Z[\beta]}M\cong \Z/2\frac{w^4}{(1-w^2)^2},\; Tor_1(M,M)\cong M[2].$$

\end{enumerate}
\end{lemma}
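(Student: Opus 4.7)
The plan is to build a length-one free resolution of $N$ over $\Z[\beta]$ and obtain everything else by tensoring and by exploiting the short exact sequence $0 \to \Z[\beta] \to N \to M \to 0$. Write $y = \beta/2$, so that $N$ has $\Z$-basis $\{y^n : n \geq 0\}$ with $\beta \cdot y^n = 2 y^{n+1}$. Take $F_0 = \bigoplus_{n \geq 0} \Z[\beta]\{e_n\}$ with $|e_n| = 2n$, $F_1 = \bigoplus_{n \geq 0} \Z[\beta]\{f_n\}$ with $|f_n| = 2n+2$, augmentation $e_n \mapsto y^n$, and differential $d(f_n) = \beta e_n - 2 e_{n+1}$. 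Injectivity of $d$ is a one-line induction on the smallest index appearing in the support of a hypothetical kernel element, so this is a free resolution of length one and $Tor_{\geq 2}(N,-) = 0$. The $M$-statement of part (\ref{ltori1}) then drops out of the long exact Tor sequence of $0 \to \Z[\beta] \to N \to M \to 0$, since $\Z[\beta]$ is projective.

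For parts (\ref{ltori2}) and (\ref{ltori3}), tensor the resolution with $N$ (resp.\ $M$) and compute degree by degree. In degree $2m$ of $F_0 \otimes_{\Z[\beta]} N$, the $\Z$-basis is $c_n := e_n \otimes y^{m-n}$ for $0 \leq n \leq m$, and a direct check gives $(d \otimes 1)(f_n \otimes y^{m-n-1}) = 2(c_n - c_{n+1})$. A short induction shows the kernel is zero, so $Tor_1(N,N) = 0$, and the cokernel is $\Z\{c_0\} \oplus (\Z/2)^m$ with the torsion generated by the differences $c_{n+1}-c_n$. The multiplication map $N \otimes N \to N$ splits off the free summand, so $N \otimes N \cong N \oplus K$ with $K$ pure $2$-torsion; the identity $\beta(c_{n+1}-c_n) = 2(c'_{n+2}-c'_{n+1})$ in the analogous basis one degree up forces $\beta$ to act trivially on $K$, and reading off dimensions yields $K \cong \Z/2 \cdot w^2/(1-w^2)^2$. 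Part (\ref{ltori3}) then follows formally from the long exact Tor sequence of $0 \to \Z[\beta] \to N \to M \to 0$ tensored with $N$: the splitting identifies the inclusion $\Z[\beta] \otimes N = N \hookrightarrow N \otimes N$ with the free summand, so $N \otimes M \cong K$ and $Tor_1(M,N) = 0$.

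For part (\ref{ltori4}), tensor $0 \to \Z[\beta] \to N \to M \to 0$ with $M$ and use (\ref{ltori1}) and (\ref{ltori3}) to obtain
\[
0 \to Tor_1(M,M) \to M \xrightarrow{\phi} N \otimes M \to M \otimes M \to 0.
\]
The map $\phi$ sends $\bar y^m$ to $1 \otimes \bar y^m$, which in the basis of $N \otimes M$ computed in part (\ref{ltori3}) hits a single $\Z/2$-generator in degree $2m$. Hence in degree $2m$ the image is $\Z/2$, the kernel is $2 M_{2m} \cong \Z/2^{m-1}$, and the cokernel is $(\Z/2)^{m-1}$, yielding $M \otimes M \cong \Z/2 \cdot w^4/(1-w^2)^2$. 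To identify $Tor_1(M,M)$ with $M[2]$ as $\Z[\beta]$-modules one verifies that $\beta \cdot (2\bar y^m) = 2 \cdot (2\bar y^{m+1})$ in $M$, which is exactly $\beta$-multiplication in $M$ after shifting the grading up by $2$.

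The main technical obstacle is the explicit identification of $N \otimes N$ as $N \oplus K$ with the correct $\Z[\beta]$-module structure on $K$: a Poincar\'e-series match is not enough, so one must fix the basis $\{c_n\}$, verify that the relations coming from the image of $d \otimes 1$ give no further identifications beyond $2(c_{n+1}-c_n)=0$, and confirm that $\beta$ acts as zero on the $2$-torsion summand. Once this is done, parts (\ref{ltori3}) and (\ref{ltori4}) become largely formal consequences of the two Tor exact sequences above.
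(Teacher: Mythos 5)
Your proof is correct and takes essentially the same route as the paper: the same length-one free resolution of $N$ (the paper's $t_i\mapsto 2z_i-\beta z_{i-1}$ is your $f_n\mapsto \beta e_n-2e_{n+1}$ up to sign and reindexing), tensoring it to obtain the $N\otimes N$ computation, and then the long exact Tor sequences of $0\to\Z[\beta]\to N\to M\to 0$ for the remaining parts, with the same identifications of the connecting maps, kernels and cokernels. The only blemish is a harmless index slip: $\beta(c_{n+1}-c_n)=2(c'_{n+1}-c'_n)$ rather than $2(c'_{n+2}-c'_{n+1})$, which does not affect the conclusion that $\beta$ kills the torsion summand.
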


\begin{proof}
We have a free $\Z[\beta]$-resolution of $N$ of the form
\beg{eltor1}{0\r \bigoplus_{i\geq 1}\Z[\beta]\{t_i\}\r\bigoplus_{i\geq 0}\Z[\beta]\{z_i\}
}
where $|z_i|=|t_i|=2i$, 
\beg{eltor2}{
t_i\mapsto 2z_i-\beta z_{i-1}, \; i\geq 1.
}
The augmentation sends 
$$z_i\mapsto (\frac{\beta}{2})^i.$$
Similarly, we have a free $\Z[\beta]$-resolution of $M$ of the form
\beg{eltor3}{0\r \bigoplus_{i\geq 0}\Z[\beta]\{t_i\}\r\bigoplus_{i\geq 0}\Z[\beta]\{z_i\}
}
with \rref{eltor2} and 
\beg{eltor4}{t_0\mapsto z_0.
}
Statement \ref{ltori1} follows.

To prove Statement \ref{ltori2}, apply $?\otimes_{\Z[\beta]}N$ to \rref{eltor1}. We obtain
$$\bigoplus_{i\geq 1}\Z[\frac{\beta}{2}]\{t_i\}\r\bigoplus_{i\geq 0}\Z[\frac{\beta}{2}]\{z_i\},$$
$$t_i\mapsto 2(z_i-\frac{\beta}{2}z_{i-1}), \; i\geq 1.$$
Clearly, this is injective and the cokernel is
$$\Z[\frac{\beta}{2}]\{z_0\}\oplus \bigoplus_{i\geq 1}\Z/2[\frac{\beta}{2}]\{z_i-\frac{\beta}{2}z_{i-1}\},$$
as claimed.

For Statement \ref{ltori3}, consider the short exact sequence of $\Z[\beta]$-modules
\beg{eltor5}{
0\r \Z[\beta]\r N\r M\r 0,
}
and the associated long exact sequence of $Tor(?,N)$. We get
$$\diagram 0\rto & Tor_1(M,N)\rto & N\rto^(.3)\alpha & N\oplus \Z/2\frac{w^2}{(1-w^2)^2}
\rto &N\otimes_{\Z[\beta]} M\rto & 0
\enddiagram$$
where $\alpha$ is the inclusion of the first summand. Statement \ref{ltori3} follows. 

For Statement \ref{ltori4}, consider the long exact sequence obtained by applying $Tor(?,M)$
to \rref{eltor5}. We obtain
$$\diagram
0\rto & Tor_1(M,M)\rto & M
\rto^(.3)\gamma & \Z/2\frac{w^2}{(1-w^2)^2}\rto & M\otimes_{\Z[\beta]}M\rto & 0.
\enddiagram$$
The map $\gamma$ is the identity on 
$$\bigoplus_{i\geq 1}\Z/2\{z_i\},$$
and $0$ otherwise. Realizing that
$$Ker(\gamma)\cong M[2],$$
Statement \ref{ltori4} follows.

\end{proof}

\vspace{3mm}
We now turn to discussing $k$-module structures. For an $E_\infty$-ring spectrum $R$, by a module, 
we shall always mean an $E_\infty$-module. For background information about how
to do commutative algebra over $E_\infty$-ring spectra, the reader
is referred to \cite{ekmm}. 

\vspace{3mm}
By a degree $0,1$-$R_*$-resolution we shall mean 
a free resolution of an $R_*$-module non-trivial only in degrees $0,1$. It is easy to see that a degree
$0,1$-$R_*$-resolution is realized by an $R$-module uniquely up to $R$-module equivalence. 

Thus, we have unique $k$-modules $\mathscr{M},\mathscr{N}$ with $\mathscr{M}_*=M$,
$\mathscr{N}_*=N$ and, furthermore, 
$$k\wedge B\Z/2\sim \mathscr{M}[-1]$$
as $k$-modules. Letting 
$$H=k/(\beta,2),$$
(recall that the order of killing elements does not matter), we further conclude from Lemma \ref{ltor1} that
\beg{eltor6}{\mathscr{M}\wedge_k\mathscr{N}\sim H\frac{w^2}{(1-w^2)^2}
=Hw^2(\sum_{i=0}^\infty w^{2i})^2=H\sum_{i=1}^\infty iw^{2i}.
}
(using the same convention for module spectra as for graded modules). This is because we get a map
to $H[2(i+j+1)]$ by sending the generator $z_{i+1}(\beta/2)^j$ to $H$, while sending the
generators $z_{k}(\beta/2)^\ell$ to $0$ for $(k,\ell)\neq (i+1,j)$. 

Now consider the cofibration sequence
\beg{eltor7}{\mathscr{M}\r\mathscr{N}\wedge_k\mathscr{M}\r\mathscr{M}\wedge_k\mathscr{M}.
}
From the above discussion, we obtain a commutative diagram of cofibrations in the derived
category of $k$-modules
\beg{eltor9}{\diagram
\mathscr{M}[2]\dto\rto^\kappa & H\frac{w^4}{(1-w^2)^2}\dto \rto &\mathscr{M}\wedge_k\mathscr{M}
\dto^{Id}\\
\mathscr{M}\dto\rto & H\frac{w^2}{(1-w^2)^2}\dto\rto &\mathscr{M}\wedge_k\mathscr{M}\\
H\frac{w^2}{1-w^2}\rto ^{Id} & H\frac{w^2}{1-w^2} &
\enddiagram
}
where the map $\kappa$ is $0$ on coefficients. But it follows from the resolution of $\mathscr{M}$ that
all non-trivial maps of $k$-modules $\mathscr{M}\r H[2i]$ are non-zero on coefficients. Thus, $\kappa=0$, 
and we have proved

\begin{proposition}\label{ptor1}
As $k$-modules, we have
$$k\wedge B\Z/2\wedge B\Z/2[2]\sim \mathscr{M}\wedge_k\mathscr{M}\sim \mathscr{M}[3]\vee 
H\frac{w^4}{(1-w^2)^2}.$$
\end{proposition}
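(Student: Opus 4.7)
\Proofs
The first equivalence is immediate: since $k\wedge B\Z/2\sim \mathscr{M}[-1]$, smashing two copies over $k$ gives $k\wedge B\Z/2\wedge B\Z/2\sim (\mathscr{M}\wedge_k\mathscr{M})[-2]$, and shifting by $[2]$ yields the claim.

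For the second equivalence, the plan is to realize $\mathscr{M}\wedge_k\mathscr{M}$ as the cofiber of some map $\kappa\colon\mathscr{M}[2]\r H\frac{w^4}{(1-w^2)^2}$ and to show that $\kappa\simeq 0$. Given $\kappa\simeq 0$, the cofibration splits and
$$\mathscr{M}\wedge_k\mathscr{M}\sim H\frac{w^4}{(1-w^2)^2}\vee \Sigma(\mathscr{M}[2])=\mathscr{M}[3]\vee H\frac{w^4}{(1-w^2)^2}.$$
The cofibration itself (the top row of \rref{eltor9}) arises by combining the cofibration \rref{eltor7}, which by Lemma \ref{ltor1}(\ref{ltori3}) takes the form $\mathscr{M}\r H\frac{w^2}{(1-w^2)^2}\r \mathscr{M}\wedge_k\mathscr{M}$, with the cofibration $\mathscr{M}[2]\stackrel{\beta}{\r}\mathscr{M}\r H\frac{w^2}{1-w^2}$ and the splitting $H\frac{w^2}{(1-w^2)^2}\sim H\frac{w^4}{(1-w^2)^2}\vee H\frac{w^2}{1-w^2}$, via a standard $3\times 3$ diagram chase.

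Showing $\kappa\simeq 0$ proceeds in two stages. First, $\kappa_*=0$ on coefficients: combining Lemma \ref{ltor1}(\ref{ltori4}) with the K\"unneth spectral sequence identifies $\pi_*(\mathscr{M}\wedge_k\mathscr{M})\cong \Z/2\frac{w^4}{(1-w^2)^2}\oplus M[3]$ (no extensions, since in each single degree at most one of the two summands is nonzero); substituting this into the LES of the top row then forces $\mathrm{im}(\kappa_*)=0$.

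The main technical step is lifting $\kappa_*=0$ to $\kappa\simeq 0$ as $k$-modules. Since $H\frac{w^4}{(1-w^2)^2}$ is a wedge of shifted copies of $H$, it suffices to show that every $k$-module map $\mathscr{M}\r H[2i]$ is detected on coefficients. Using the two-term resolution \rref{eltor3}-\rref{eltor4} and noting that both $2$ and $\beta$ act as zero on $H$, one computes directly that $[\mathscr{M},H[2i]]_k\cong\Z/2$, generated by the unique map sending $z_i$ to the generator of $H[2i]_{2i}=\Z/2$ and killing every $z_j$ with $j\neq i$; this map is visibly nonzero on $\pi_{2i}\mathscr{M}\cong \Z/2^i$. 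Hence $\kappa\simeq 0$, and the proposition follows.
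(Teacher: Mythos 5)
Your argument is correct and follows essentially the same route as the paper: your cofibration $\mathscr{M}[2]\stackrel{\kappa}{\r}H\frac{w^4}{(1-w^2)^2}\r\mathscr{M}\wedge_k\mathscr{M}$ is exactly the top row of the paper's diagram \rref{eltor9}, and you likewise conclude $\kappa\simeq 0$ by checking it vanishes on coefficients and that every nonzero $k$-module map $\mathscr{M}\r H[2i]$ is detected on coefficients via the resolution \rref{eltor3}. The only difference is that you make explicit the computation $[\mathscr{M},H[2i]]_k\cong\Z/2$ and the K\"unneth identification of $\pi_*(\mathscr{M}\wedge_k\mathscr{M})$, which the paper leaves implicit.
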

\qed

\vspace{3mm}
To go farther, we note the following:

\begin{lemma}\label{lhm}
We have
$$H\wedge_k\mathscr{M}\sim H\frac{w^2}{1-w}.$$
\end{lemma}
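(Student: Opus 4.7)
The plan is to exploit the degree $0,1$-resolution of $M$ over $k_*$ already written down in \rref{eltor3}, and base change it to $H$.

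First, the resolution \rref{eltor3} realizes, by the uniqueness of degree $0,1$-$R_*$-resolutions at the spectrum level recalled just before Lemma \ref{lhm}, to a cofibration of $k$-modules
$$\bigvee_{i\geq 0} k[2i] \xrightarrow{\;f\;} \bigvee_{i\geq 0} k[2i] \to \mathscr{M},$$
where $f$ sends the generator $t_0$ to $z_0$ and $t_i$ to $2z_i-\beta z_{i-1}$ for $i\geq 1$. Smashing this cofibration over $k$ with $H$ (which preserves cofiber sequences) yields a cofibration of $H$-modules
$$\bigvee_{i\geq 0} H[2i] \xrightarrow{\;H\wedge_k f\;} \bigvee_{i\geq 0} H[2i] \to H\wedge_k \mathscr{M}.$$

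Next, since both $H$-modules on the left are free on generators in distinct degrees, and $\pi_*H=\F_2$ is concentrated in degree $0$, any $H$-module map between them is determined by its effect on coefficients. The effect of $H\wedge_k f$ on coefficients is the reduction of $f$ modulo $(2,\beta)$. Thus $t_0\mapsto z_0$ survives as an isomorphism on the degree $0$ summand, while for each $i\geq 1$ the formula $t_i\mapsto 2z_i-\beta z_{i-1}$ reduces to $0$. Splitting the cofibration according to the decomposition into summands indexed by $i$, the $i=0$ summand has contractible cofiber, and for each $i\geq 1$ we get the cofiber of the zero map $H[2i]\xrightarrow{0} H[2i]$, namely $H[2i]\vee H[2i+1]$. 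Summing over $i\geq 1$ gives
$$H\wedge_k\mathscr{M}\sim \bigvee_{i\geq 1}\bigl(H[2i]\vee H[2i+1]\bigr) = H\sum_{i\geq 1}(w^{2i}+w^{2i+1})=H\frac{w^2}{1-w}.$$

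The only genuinely non-routine point is the justification that $H\wedge_k f$ is determined by its action on $\pi_*$; this is where one needs that both source and target are free $H$-modules in distinct degrees, so that no hidden $H$-module extensions can perturb the splitting. Once that is in hand, the computation is just bookkeeping with the explicit resolution \rref{eltor3}.
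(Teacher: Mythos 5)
Your overall strategy is the same as the paper's: smash the realization of the resolution \rref{eltor3} with $H$, argue that the resulting map is an equivalence on the $i=0$ summand and zero elsewhere, and read off the cofiber as $H\frac{w^2}{1-w}$. The bookkeeping at the end is fine. The gap is exactly at the point you flag as the only non-routine one: the assertion that ``any $H$-module map between these wedges is determined by its effect on coefficients.'' As the paper sets things up, $H=k/(\beta,2)$ is only a $k$-module; no ring structure on $H$ is constructed, so there is no category of $H$-modules and no notion of a free $H$-module available, and $H\wedge_k f$ is a priori only a map of $k$-modules. Read at the level of $k$-modules (or spectra), your principle is false: being zero on homotopy does not force a map to be null. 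Indeed $H$ is (a form of) $H\F_2$, and $[H,H]_k$ contains nonzero elements of negative degree (Bockstein/Milnor-type operations, dual to $2$ and $\beta$), all of which vanish on coefficients since $\pi_*H$ sits in a single degree; between the very wedges you consider there are potentially nonzero components $H[2i]\to H[2i+4]$ invisible on homotopy. The classical warning that multiplication by $2$ on the mod $2$ Moore spectrum is \emph{not} null, despite being zero on homotopy, shows that the nullity of the component maps $2\colon H[2i]\to H[2i]$ and $\beta\colon H[2i]\to H[2i-2]$ is a genuine fact requiring proof, not a formality about generators in distinct degrees.

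This is precisely what the paper supplies: using the defining Koszul resolution of $H$ over $k$ one computes that the self-maps of $H$ as a $k$-module vanish in the relevant degrees (the degree $0$ group is $\F_2$ generated by the identity, which is detected on coefficients, and the group receiving the $\beta$-component vanishes), so multiplication by $2$ and by $\beta$ on $H$ are null, and the map you call $H\wedge_k f$ is zero away from the $i=0$ summand. Your argument can be repaired in two ways: either insert this computation of $[H,H]_k$ in nonnegative degrees, or first equip $H$ with a $k$-algebra structure (e.g.\ by identifying $k/(\beta,2)$ with $H\F_2$ via the Postnikov truncation $k\to H\Z\to H\F_2$ and checking this agrees with the $k$-module $k/(\beta,2)$), note that $H\wedge_k(-)$ then takes values in $H$-modules and that $H\wedge_k f$ is $H$-linear, and only then invoke the free-module argument (where ``distinct degrees'' plays no role; freeness is what matters). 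As written, the central step is unsupported, so the proof is incomplete, though the route is the paper's and the statement and final count are correct.
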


\begin{proof}
Apply $?\wedge_kH$ to the geometric realization of \rref{eltor3}. We see that we can eliminate
the $i=0$ terms, since they are linked by an isomorphism. Omitting those terms, we 
claim that the resulting map of $k$-modules
$$H\frac{w^2}{1-w^2}\r H\frac{w^2}{1-w^2}$$
is zero. To this end, it suffices to show that the self-maps $2,\beta$ of $H$ are both zero. But considering
the defining resolution of $H$ by free $k$-modules, we see that the only non-zero self-maps of $H$ as $k$-modules
are in negative degrees. 
\end{proof}

\vspace{3mm}
Let 
$$E=H\frac{w^4}{(1-w^2)^2}$$
be the ``error term" in Proposition \ref{ptor1}.

\begin{proposition}\label{pbh}
The following statements hold as $k$-modules.

\begin{enumerate}
\item\label{pbhi1}
For $i\geq 0$,
$$E\wedge_k\underbrace{\mathscr{M}\wedge_k\dots\wedge_k\mathscr{M}}_{\text{$i$ times}}
=H\frac{w^{2(i+2)}}{(1-w^2)^2(1-w)^i}.$$

\item\label{pbhi2}
For $i\geq 2$,
$$\begin{array}{l}k\wedge\underbrace{B\Z/2\wedge\dots\wedge B\Z/2}_{\text{$i$ times}}[i]=
\underbrace{\mathscr{M}\wedge_k\dots\wedge_k\mathscr{M}}_{\text{$i$ times}}
=\\[3ex]
\displaystyle
\mathscr{M}[3(i-1)]\vee H\frac{w^4}{(1-w^2)^2}\left(\frac{w^2}{1-w}\right)^{i-2}
\left(\frac{(1-w)^{i-1}w^{i-1}-1}{(1-w)w-1}\right).
\end{array}$$

\item\label{pbhi3}
For $i\geq 0$,
$$\begin{array}{l}k\wedge B(\Z/2)^i_+=k\vee
\mathscr{M}w^{-3}((1+w^2)^i-1)\vee\\[3ex]
\displaystyle
H\frac{1}{(1-w^2)^2(1-w)^{i-1}}\left(
\frac{1-(1+w^2)^i(1-w)^i}{1-(1+w^2)(1-w)}-\frac{1-(1-w)^i}{1-(1-w)}
\right).
\end{array}
$$

\end{enumerate}
\end{proposition}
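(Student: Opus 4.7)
The plan is to establish the three parts in order, each relying on what precedes. Parts \ref{pbhi1} and \ref{pbhi2} will be direct inductions on $i$, and part \ref{pbhi3} will follow from part \ref{pbhi2} by decomposing $B(\Z/2)_+^{\wedge i}$ as a wedge of smash powers of $B\Z/2$ indexed by subsets of $\{1,\dots,i\}$ and then summing the $k$-module contributions.

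For part \ref{pbhi1}, induct on $i$ with trivial base case $i=0$. Since $E$ is a wedge of shifts of $H$, Lemma \ref{lhm} says that smashing each summand with $\mathscr{M}$ multiplies its Poincare series by $w^2/(1-w)$; iterating gives exactly the claimed rational function.

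For part \ref{pbhi2}, the base case $i=2$ is Proposition \ref{ptor1}. Assuming the splitting at level $i$, smash with $\mathscr{M}$: the $\mathscr{M}[3(i-1)]$-summand becomes $(\mathscr{M}[3]\vee E)[3(i-1)]$ by Proposition \ref{ptor1}, and the $E$-summand becomes its product with $w^2/(1-w)$ by part \ref{pbhi1}. Combining and reindexing, the Poincare series identity needed to pass from level $i$ to level $i+1$ reduces, after clearing denominators, to $w^{3(i-1)}=(w^2/(1-w))^{i-1}(1-w)^{i-1}w^{i-1}$, which is immediate. No separate splitting argument is needed beyond Proposition \ref{ptor1}, since the wedge decomposition at level $i$ is preserved by smashing with $\mathscr{M}$.

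For part \ref{pbhi3}, use the splitting $B(\Z/2)_+^{\wedge i}\simeq\bigvee_{j=0}^{i}\binom{i}{j}B(\Z/2)^{\wedge j}$ together with $k\wedge B(\Z/2)^{\wedge j}\sim\mathscr{M}^{\wedge_k j}[-j]$ and the formula of part \ref{pbhi2}. The $\mathscr{M}$-contributions add via the binomial theorem to $\mathscr{M}\cdot w^{-3}((1+w^2)^i-1)$. The $H$-contributions split into two binomial sums corresponding to the two terms in the numerator of the rational function appearing in part \ref{pbhi2}; one sum evaluates to a $(1+w^2)^i$-type expression, while the other becomes $\sum_j\binom{i}{j}(w/(1-w))^j=1/(1-w)^i$ after rewriting. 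The main obstacle, and only nonroutine step, will be combining these two closed forms and recognizing the result as the stated formula; this uses the factorizations $(1-w^2)^2(1-w)^{i-1}=(1-w)^{i+1}(1+w)^2$ and $1-(1+w^2)(1-w)=-w((1-w)w-1)$, after which the expression matches the claim.
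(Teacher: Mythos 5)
Your argument is correct, and for parts (\ref{pbhi1}) and (\ref{pbhi2}) it coincides with the paper's proof (induction from Proposition \ref{ptor1} and Lemma \ref{lhm}, with the geometric-series identity handling the error terms). For part (\ref{pbhi3}) you take a genuinely different route: the paper does not use part (\ref{pbhi2}) at all, but instead inducts on the number of factors, smashing with one copy of $k\wedge B\mathbb{Z}/2_+\simeq k\vee\mathscr{M}[-1]$ at a time, which yields the recursions $p_{i+1}=w^{-1}+(1+w^2)p_i$ and $q_{i+1}=\frac{w^3}{(1-w^2)^2}p_i+\frac{q_i}{1-w}$ for the $\mathscr{M}$- and $H$-coefficients, and then solves them; you instead expand $(B\mathbb{Z}/2_+)^{\wedge i}$ as $\bigvee_j\binom{i}{j}(B\mathbb{Z}/2)^{\wedge j}$, apply part (\ref{pbhi2}) to each smash power, and sum binomially. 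Your route does work: the $\mathscr{M}$-part sums as you say, and for the $H$-part the two binomial sums (which start at $j=2$, so each carries correction terms $-1$ and a term linear in $i$; note that the linear-in-$i$ corrections cancel between the two sums) combine, via the factorization $1-(1+w^2)(1-w)=-w((1-w)w-1)=w(1-w+w^2)$, to exactly the stated rational function --- I checked that the final identity you assert does hold. The trade-off is that the paper's recursion is self-contained and cleaner (it is exactly the scheme reused verbatim for $p>2$ in Section \ref{sp}, where the analogous binomial recombination would be messier), whereas your argument buys part (\ref{pbhi3}) as a direct corollary of part (\ref{pbhi2}) at the cost of the heavier closed-form summation at the end.
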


\begin{proof}
We have $k\wedge B\Z/2_+=k\vee \Sigma^{-1}\mathscr{M}$. Statement \ref{pbhi1} follows
immediately from Lemma \ref{lhm}. Statement \ref{pbhi2} follows from Proposition \ref{ptor1} and Lemma
\ref{lhm} by induction, using the formula
$$1+\frac{1}{a}+\dots+\frac{1}{a^{i-2}}=
\frac{a^{i-1}-1}{(a-1)a^{i-2}}.
$$
To prove Statement \ref{pbhi3}, let, as an induction hypothesis,
$$
k\wedge B(\Z/2)^i_+=k\vee \mathscr{M} \cdot p_i\vee H\cdot q_i.
$$
We have proved that $p_1=w^{-1}$, $q_1=0$, and
$$\begin{array}{l}
k\vee \mathscr{M}\cdot p_{i+1}\vee H\cdot q_{i+1}=\\[1ex]
(k\vee \mathscr{M}\cdot p_i\vee H\cdot q_i)\wedge_k(k\vee\mathscr{M} w^{-1})=\\[1ex]
\displaystyle
k\vee \mathscr{M}\cdot(w^{-1}+(1+w^2)p_i)\vee H\cdot\left(
\frac{w^3}{(1-w^2)^2}p_i+\frac{q_i}{1-w}
\right).
\end{array}
$$
Thus, we have
\beg{epbh+}{p_{i+1}=w^{-1}+(1+w^2)p_i,
}
\beg{epbh++}{q_{i+1}=\frac{w^3}{(1-w^2)^2}p_i+\frac{q_i}{1-w}.
}
From \rref{epbh+}, we obtain
$$p_i=((1+w^2)^i-1)w^{-3},$$
so \rref{epbh++} gives
$$q_{i+1}=\frac{(1+w^2)^i-1}{(1-w^2)^2}+\frac{q_i}{1-w}.$$
Solving the recursion gives Statement \ref{pbhi3}.
\end{proof}

\vspace{3mm}
We now calculate $F(B(\Z/2)^i_+,k)$. Again, the restriction map $B\Z/2_+\r S^0$ induces a splitting
$$F(B\Z/2_+, k)\sim k\vee F(B\Z/2,k).
$$
Let 
$$\mathscr{P}=F(B\Z/2,k)=xk[[x]]/(x\beta+2)$$
where
$$k[[x]]=\prod_{i\geq 0}k\{x^i\},\; |x|=-2.$$
This can be expressed, in an obvious way, as a homotopy limit of $k[x]/x^j$ in the obvious sense.
When involving these $k$-modules, we shall denote by $\widehat{\wedge}_k$ the operation of taking 
$\wedge_k$ on the corresponding ``truncated polynomial" modules, and then passing to homotopy limit of 
the resulting inverse sequence. Thus, similarly to Lemma \ref{ltor1}, Proposition \ref{ptor1} and Lemma \ref{lhm}, 
we prove that
$$\mathscr{P}\widehat{\wedge}_k\mathscr{P}\sim \mathscr{P}[-2]\vee H\frac{w^{-4}}{(1-w^{-2})^2},$$
$$\mathscr{P}\widehat{\wedge}_k H\sim H \frac{w^{-1}}{1-w^{-1}}.$$
Analogously to the proof of Proposition \ref{pbh}, we then obtain

\begin{proposition}\label{pbc}
We have
$$\begin{array}{l}
F(B(\Z/2)^i,k)\sim \mathscr{P}\cdot \protect((1+w^{-2})^i-1)\vee\\[3ex]
\displaystyle
H\frac{1}{(1-w^{-2})^2(1-w^{-1})^{i-1}}\left(
\frac{1-(1+w^{-2})^i(1-w^{-1})^i}{1-(1+w^{-2})(1-w^{-1})}-\frac{1-(1-w^{-1})^i}{1-(1-w^{-1})}
\right).
\end{array}
$$
\end{proposition}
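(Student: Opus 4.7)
The plan is to proceed by induction on $i$, in direct analogy with Statement \ref{pbhi3} of Proposition \ref{pbh}, replacing $\mathscr{M}$, $\wedge_k$, and positive powers of $w$ by their cohomological/completed counterparts $\mathscr{P}$, $\widehat{\wedge}_k$, and the corresponding inverse powers. The base case is $F(B\Z/2_+,k)\sim k\vee \mathscr{P}$, from the basepoint splitting. The inductive step combines the two decomposition identities
$$\mathscr{P}\widehat{\wedge}_k\mathscr{P}\sim \mathscr{P}[-2]\vee H\frac{w^{-4}}{(1-w^{-2})^2},\qquad H\widehat{\wedge}_k\mathscr{P}\sim H\frac{w^{-1}}{1-w^{-1}}$$
stated just above the proposition with the Kunneth identification
$$F(B(\Z/2)^{i+1}_+,k)\sim F(B(\Z/2)^i_+,k)\,\widehat{\wedge}_k\,F(B\Z/2_+,k),$$
which follows from the adjunction $F(X\wedge Y,k)\simeq F(X,F(Y,k))$ combined with the pro-finite skeletal description of $B\Z/2$ that is built into the very definition of $\widehat{\wedge}_k$.

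Writing inductively $F(B(\Z/2)^i_+,k)\sim k\vee \mathscr{P}\cdot p_i^*\vee H\cdot q_i^*$, I would distribute $\widehat{\wedge}_k$ with $k\vee \mathscr{P}$ across the three summands, apply the decomposition identities to each cross-term, and collect like factors; the self-contribution of $H\cdot q_i^*$ combines as $q_i^*\bigl(1+w^{-1}/(1-w^{-1})\bigr)=q_i^*/(1-w^{-1})$. The result is the pair of recursions
$$p_{i+1}^*=1+(1+w^{-2})\,p_i^*,\qquad q_{i+1}^*=\frac{w^{-4}}{(1-w^{-2})^2}\,p_i^*+\frac{q_i^*}{1-w^{-1}}$$
with initial data $p_1^*=1$, $q_1^*=0$. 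Unwinding the $p$-recursion as a geometric series gives the closed form for $p_i^*$; substituting into the $q$-recursion and summing the resulting telescope produces the rational expression for $q_i^*$ matching the one in the proposition. The statement then follows by stripping off the $k$-summand that distinguishes $F(B(\Z/2)^i_+,k)$ from $F(B(\Z/2)^i,k)$.

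The main obstacle is the justification of the completed Kunneth identification: because $F$ is contravariant and $\mathscr{P}$ is defined as an inverse limit of the truncated-polynomial $k$-modules $k[x]/x^j$, one must verify that the relevant $\lim^1$-terms vanish and that $\widehat{\wedge}_k$ really does compute the derived function spectrum on the right-hand side. This reduces to the pro-analogue of the ext-dimension-one property of $\mathscr{P}$ over $k_*$ (the dual of Statement \ref{ltori1} of Lemma \ref{ltor1}), which guarantees term-by-term collapse of the Kunneth spectral sequence at $E_2$ and Mittag-Leffler behaviour for the inverse system. Once this verification is in place, the remainder is a mechanical mirror image of the induction carried out in the proof of Proposition \ref{pbh}.
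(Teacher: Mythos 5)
Your proposal is essentially the paper's own proof: the paper establishes exactly the two completed decompositions $\mathscr{P}\widehat{\wedge}_k\mathscr{P}\sim\mathscr{P}[-2]\vee H\frac{w^{-4}}{(1-w^{-2})^2}$ and $\mathscr{P}\widehat{\wedge}_kH\sim H\frac{w^{-1}}{1-w^{-1}}$ and then runs the same induction "analogously to Proposition \ref{pbh}", so your base case, K\"unneth step, and recursions for $p_i^*,q_i^*$ reproduce its argument (and your remarks on $\lim^1$/Mittag--Leffler are more care than the paper spends). One bookkeeping caveat: with the normalization $\mathscr{P}=F(B\Z/2,k)$ your recursion actually solves to $p_i^*=w^2\bigl((1+w^{-2})^i-1\bigr)$ (and correspondingly shifts $q_i^*$), so the closed forms match the displayed coefficients of the Proposition only up to an overall power of $w$ -- a normalization ambiguity already present in the paper's statement itself (compare the case $i=1$ with the definition of $\mathscr{P}$), not a defect of your method.
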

\qed

Note the interesting symmetry of the ``error terms" in Borel homology and cohomology. 

\vspace{3mm}
\section{Tate cohomology}\label{state}

Now to compute the Tate cohomology 
\beg{ekktate}{\widehat{k}^{(\Z/2)^n},} 
we use the cofibration sequence of \cite{gm}
$$\diagram
k\wedge B(\Z/2)^n_+\rto^N & F(B(\Z/2)^n_+,k)\rto &\widehat{k}^{(\Z/2)^n},
\enddiagram
$$
and the computation of Proposition \ref{pbc}, and Statement \ref{pbhi3} of Proposition \ref{pbh}.
First, note that there is no possibility of extension of $k$-modules involving any of the $H$ summands, 
since there are no such extensions in non-negative degrees. It follows that \rref{ekktate}, as a $k$-module, is
a wedge sum of copies of $H$ in degrees we already know, and a $k$-module with coefficients $Q$, which
maps, as a $k$-module and in a way injective on coefficients, into $\widehat{K}^{(\Z/2)^n}$, which
has coefficients
\beg{ekktate1}{J^\wedge_2[\beta,\beta^{-1}][\frac{1}{2}]
}
(where $J$ denotes the augmentation ideal of $R((\Z/2)^n)$ and $(?)^\wedge_2$ means
completion at $2$). We will see that $Q$ is a completion 
of a $\Z[\beta]$-module with resolution in degrees $0,1$, and hence, above, the $k$-module is determined
by its coefficients $Q$.

\vspace{3mm}
To calculate $Q$, let $\alpha_1,\dots,\alpha_n$ be $\otimes$-$\Z/2$-independent $1$-dimensional complex
representations of $(\Z/2)^n$. Let $x_i\in k^*(B\Z/2)^n$ be the Euler class of $\alpha_i$. Then we can
write $1+x_i\beta=\alpha_i\in R((\Z/2)^n)$, and
\beg{ekktate2}{\widetilde{Q}=J^\wedge_2[\beta,\beta^{-1}][\frac{1}{2}]=
\bigoplus_{\substack{k\geq 1\\1\leq i_1<\dots<i_k\leq n}}
\mathbb{Q}_2[\beta,\beta^{-1}]\{x_{i_1}\dots x_{i_k}\}.
}
It is also worth noting that \rref{ekktate2} is, in fact,
\beg{ekktate+}{
\begin{array}{l}\displaystyle
K^*((B\Z/2)^n)/K_*\cdot\prod_{i=1}^n (2-\beta x_i)=\\[1ex]
\displaystyle
\Z[\beta,\beta^{-1}][[x_1,\dots,x_n]]/ (x_i(2+\beta x_i),\prod_{i=1}^n(2+\beta x_i)).
\end{array}
}
Now $Q$ is a $k^*((B\Z/2)^n)$-submodule of \rref{ekktate2}, while on a multiple of $x_i$,
$x_i=-\displaystyle\frac{2}{\beta}$. Thus, we have proved

\begin{lemma}\label{lkktate1}
$Q$ is a $\Z_2[\beta,\displaystyle\frac{2}{\beta}]$-submodule of $\widetilde{Q}$.
\end{lemma}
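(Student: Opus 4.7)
The plan is to apply the Euler-class relation $x_i(2+\beta x_i)=0$, which is derived by squaring the identity $\alpha_i = 1+\beta x_i$ and using $\alpha_i^2=1$, in order to identify the action of $\tfrac{2}{\beta}$ with a $k^*(B(\Z/2)^n)$-action on multiples of $x_i$. After inverting $\beta$ in $\widetilde{Q}$, the relation rearranges to $\tfrac{2}{\beta}\cdot x_i = -x_i^2$, so that for every $m \in \widetilde{Q}$,
\[
\tfrac{2}{\beta}\cdot(x_i m)\;=\;-x_i^2 m\;=\;-x_i\cdot(x_i m).
\]
In other words, on any element of $\widetilde{Q}$ that is a multiple of $x_i$, multiplication by $\tfrac{2}{\beta}$ coincides with multiplication by the element $-x_i$ of $k^*(B(\Z/2)^n)$.

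Next I would use the direct-sum decomposition (\ref{ekktate2}) to write an arbitrary $q \in Q\subset\widetilde{Q}$ as a finite $\Q_2[\beta,\beta^{-1}]$-linear combination of basis monomials $x_I = x_{i_1}\cdots x_{i_k}$ with $k \geq 1$. Each such basis monomial is visibly a multiple of some $x_i$, so the identification above, applied termwise, expresses $\tfrac{2}{\beta}\cdot q$ as a finite sum of $k^*$-actions on pieces of $q$. By the $k^*$-closure of $Q$, the sum lies in $Q$; combined with the evident $\beta$-stability (since $\beta\in k^*$), this gives $Q$ the structure of a $\Z[\beta,\tfrac{2}{\beta}]$-submodule.

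The strengthening from $\Z$- to $\Z_2$-coefficients is inherited from the $2$-adic completeness inherent in the construction: both terms in the cofiber sequence defining $\widehat{k}^{(\Z/2)^n}$ are $2$-complete $k$-modules, so the non-Eilenberg-MacLane summand, whose coefficients are $Q$, is naturally a $k^\wedge_2$-module, and the resulting $\Z_2$-action is compatible with the $\Q_2$-scalar action on the ambient $\widetilde{Q}$.

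I expect the main subtlety to lie in the second paragraph: a priori, the decomposition $q = \sum_I f_I x_I$ need not split each individual summand $f_I x_I$ into $Q$, so the termwise argument has to be justified. I would handle this by observing that the $x_j$-action on the basis of (\ref{ekktate2}) either rescales the $x_I$-summand (when $j\in I$, via $x_j\cdot x_I = -\tfrac{2}{\beta}x_I$) or shifts it to the $x_{I\cup\{j\}}$-summand (when $j\notin I$), so stability of $Q$ under each $x_j$ and under $\beta$ formally forces stability under $\tfrac{2}{\beta}$ without requiring a genuine splitting of $Q$.
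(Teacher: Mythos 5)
Your first step is exactly the paper's argument: from $\alpha_i^2=1$ and $\alpha_i=1+\beta x_i$ one gets $x_i(2+\beta x_i)=0$, so on any multiple of $x_i$ the scalar $\tfrac{2}{\beta}$ acts as the Borel-cohomology class $-x_i$, and $Q$ is a $k^*(B(\Z/2)^n)$-submodule of $\widetilde{Q}$. You have also correctly located the delicate point, namely that a general element of $Q$ is only a \emph{sum} of multiples of the various $x_i$, and the individual summands need not lie in $Q$.

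The proposed repair in your last paragraph, however, does not work: it is simply false that a $\Z_2$-submodule of $\widetilde{Q}$ that is stable under $\beta$ and under each $x_j$ is automatically stable under $\tfrac{2}{\beta}$. Take $n=2$ and let $S\subset\widetilde{Q}$ be the $\Z_2[\beta][x_1,x_2]$-submodule generated by $q=x_1+x_2$. In (homological) degree $-4$, the only generators of $S$ are $\beta^{a+b-1}x_1^ax_2^b\,q$ with $a+b\geq 1$ (the case $a=b=0$ would need $\beta^{-1}$), and these are
\[
u_1=x_1q=-\tfrac{2}{\beta}x_1+x_1x_2,\qquad u_2=x_2q=-\tfrac{2}{\beta}x_2+x_1x_2,\qquad
\beta^{a+b-1}x_1^ax_2^bq=2(-2)^{a+b-1}x_1x_2\ (a,b\geq1),
\]
so the degree $-4$ part of $S$ is $\Z_2 u_1+\Z_2 u_2+4\Z_2\{x_1x_2\}$. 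Writing $\tfrac{2}{\beta}q=au_1+bu_2+c\,x_1x_2$ forces $a=b=-1$ from the $x_1$- and $x_2$-coordinates and then $c=2\notin 4\Z_2$ from the $x_1x_2$-coordinate; hence $\tfrac{2}{\beta}q\notin S$ even though $S$ is stable under $\beta$, $x_1$, $x_2$ (and $2$-adically closed). Consequently no argument using only these stabilities can prove the lemma, so your proof has a genuine gap at exactly the step you flagged. To close it one must use more specific information about $Q$ than its $k^*(B(\Z/2)^n)$-module property alone --- in effect, one needs enough explicit elements or generators of $Q$ on which multiplication by $\tfrac{2}{\beta}$ can be exhibited by hand via the relation $x_i^2=-\tfrac{2}{\beta}x_i$, which is what the paper's subsequent analysis of $P$, of $Q/P\cong M$, and of the classes $y_{i_1\dots i_k}$ (Propositions \ref{pkktate1}--\ref{pchoice1}) supplies; the paper's own two-sentence justification of the lemma is silent on this point as well, but it is not reducible to the formal claim you make.
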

\qed

Let $P\subset Q$ be the image of $k^*B(\Z/2)^n$. Then we have an extension
of $\Z[\beta,\displaystyle\frac{2}{\beta}]$-modules
$$0\r P\r Q\r M\r0.$$

\begin{proposition}\label{pkktate1}
The $\Z[\beta,\displaystyle\frac{2}{\beta}]$-module $P$ is free on generators
\beg{ettgena}{x_{i_1}\dots x_{i_k},\; 1\leq k<n, \; i_1<\dots <i_k,}
\beg{ettgenb}{\prod_{i=1}^{n}(1+\frac{\beta}{2}x_i)-1.
}
\end{proposition}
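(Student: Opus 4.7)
The plan is to establish three things: that the elements (\ref{ettgena}) and (\ref{ettgenb}) lie in $P$, that they generate $P$ as a $\Z[\beta,2/\beta]$-module, and that they are $\Z[\beta,2/\beta]$-linearly independent inside $\widetilde{Q}$. The central identity throughout is obtained from the defining relation $\prod_{i=1}^n(2+\beta x_i)=0$ of $\widetilde{Q}$ by dividing by $2^n$ (licit since $2$ is inverted in $\widetilde{Q}$), namely $\prod_{i=1}^n(1+\tfrac{\beta}{2}x_i)=0$ in $\widetilde{Q}$; in particular (\ref{ettgenb}) equals $-1$ there.

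For membership, the elements in (\ref{ettgena}) are by construction images of the corresponding products of Euler classes in $k^*B(\Z/2)^n$, while (\ref{ettgenb}), equal to $-1$ in $\widetilde{Q}$, is the image of $-1\in k^*\subset k^*B(\Z/2)^n$. For spanning, I would use $x_i^2=-(2/\beta)x_i$ (which holds in $\widetilde{Q}$) to reduce the image of any monomial $\prod_i x_i^{a_i}$ to a $\Z[\beta,2/\beta]$-multiple of the squarefree monomial $x_S$ with $S=\{i:a_i\ge1\}$ (or $1$ if $S=\emptyset$), so that $P$ is generated over $\Z[\beta,2/\beta]$ by $\{1\}\cup\{x_S:\emptyset\ne S\subseteq[n]\}$. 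Multiplying $\prod_i(1+\tfrac{\beta}{2}x_i)=0$ by $(2/\beta)^n$ yields the identity $\sum_{S\subseteq[n]}(2/\beta)^{n-|S|}x_S=0$ (with $x_\emptyset=1$); all coefficients lie in $\Z[\beta,2/\beta]$ and the coefficient of $x_{[n]}$ is $1$, so this lets one solve for $x_{[n]}$ as a $\Z[\beta,2/\beta]$-combination of $1$ and of the $x_S$ with $\emptyset\ne S\ne[n]$. Replacing $1$ by $-(\ref{ettgenb})$ then shows that (\ref{ettgena}) and (\ref{ettgenb}) span $P$.

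For linear independence, suppose $\sum_{\emptyset\ne S\ne[n]}c_S\,x_S+c_b\bigl(\prod_{i=1}^n(1+\tfrac{\beta}{2}x_i)-1\bigr)=0$ in $\widetilde{Q}$ with $c_S,c_b\in\Z[\beta,2/\beta]$. Expanding (\ref{ettgenb}) as $\sum_{\emptyset\ne S\subseteq[n]}(\beta/2)^{|S|}x_S$, this becomes $\sum_{\emptyset\ne S\ne[n]}\bigl(c_S+c_b(\beta/2)^{|S|}\bigr)x_S+c_b(\beta/2)^n\,x_{[n]}=0$. By \rref{ekktate2}, $\widetilde{Q}$ is the free $\mathbb{Q}_2[\beta,\beta^{-1}]$-module on $\{x_T:T\ne\emptyset\}$, so each coefficient must vanish; since $(\beta/2)^n\ne0$ in the domain $\mathbb{Q}_2[\beta,\beta^{-1}]$, the $x_{[n]}$ coefficient forces $c_b=0$, whence $c_S=0$ for all remaining $S$.

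The main obstacle is the spanning step: because $k^*B(\Z/2)^n$ is a completed ring of formal power series in $x_1,\dots,x_n$, the reductions via $x_i^2=-(2/\beta)x_i$ produce infinite, $2$-adically convergent sums of terms $b_m\cdot(2/\beta)^m$ with $b_m\in\Z[\beta]$, and one must verify these stay inside $\Z[\beta,2/\beta]$-combinations of $\{1,x_S\}$. This reduces to the elementary check that $\Z[\beta]\cdot\{(2/\beta)^m:m\ge0\}$ already exhausts $\Z[\beta,2/\beta]$, and is consistent with $P$ being read as the $\Z[\beta,2/\beta]$-submodule of $Q$ (a $\Z_2[\beta,2/\beta]$-module by Lemma~\ref{lkktate1}) generated by the image of $k^*B(\Z/2)^n$.
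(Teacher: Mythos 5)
Your proof is correct and takes essentially the same route as the paper's: membership via the image of $k^*B(\Z/2)^n$, spanning by reducing to squarefree monomials using $x_i^2=-\frac{2}{\beta}x_i$ and then eliminating $x_1\dots x_n$ via the relation $\prod_i(1+\frac{\beta}{2}x_i)=0$ (the paper phrases this as $x_1\dots x_n$ being a $(\frac{2}{\beta})^n$-multiple of \rref{ettgenb} plus a combination of \rref{ettgena}), and independence from the fact that $\widetilde{Q}$ is free over $\Q_2[\beta,\beta^{-1}]$ on the squarefree monomials, with \rref{ettgenb} having leading term $(\frac{\beta}{2})^n x_1\dots x_n$. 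The only substantive difference is that you flag the $2$-adic convergence issue in the spanning step explicitly (correctly resolved by noting that, degreewise, the convergent sums land in $\Z_2$-multiples of a single monomial of $\Z_2[\beta,\frac{2}{\beta}]$ --- the coefficients should indeed be $\Z_2$, matching the paper's own proof, which silently works over $\Z_2$), whereas the paper passes over this point.
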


\begin{proof}
The fact that the elements \rref{ettgena} are in $P$ follows from the fact that $P$ is the image
of $k^*B(\Z/2)^n$. For the same reason, it contains $1$, which is equal to \rref{ettgenb} by 
the relation \rref{ekktate+}. On the other hand, $P$ is generated by the elements \rref{ettgena}, 
\rref{ettgenb} ($=1$), and $x_1\dots,x_n$ which, however, is a $\displaystyle (\frac{2}{\beta})^n$-multiple
of the element \rref{ettgenb}, plus a $\Z_2[\beta,\displaystyle\frac{2}{\beta}]$-linear combination 
of the elements \rref{ettgena}, and thus can be eliminated. On the other hand, the elements
\rref{ettgena}, \rref{ettgenb} are $\Z_2[\beta,\displaystyle\frac{2}{\beta}]$-linearly independent
in $P$ since \rref{ettgena} and $x_1\dots x_n$ are $\Q_2[\beta,\displaystyle\frac{2}{\beta}]$-linearly
independent in $\widetilde{Q}$.
\end{proof}

\vspace{3mm}
Now we have proved that
\beg{ekktate3}{M=\bigoplus_{\substack{1\leq k\leq n\\ 1\leq i_1<\dots<i_k\leq n}}
\Z[\frac{\beta}{2}]/\Z[\beta]y_{i_1\dots i_k}
}
where the element $y_{i_1\dots i_k}$ is in $Tor$-degree $k$ and dimension $2k-2$.

\vspace{3mm}

\begin{proposition}\label{pchoice1}
The elements $y_{i_1\dots i_k}$ can be chosen in such a way that in $Q$, we have
\beg{ekktate4}{
y_{i_1\dots i_k}=\frac{\beta^k}{2}
\left(\prod_{s=1}^{k}(1+\frac{\beta}{2}x_{i_s})-1
\right)\prod_{u=1}^{n-k}(2+\beta x_{j_u})
}
where $\{i_1,\dots,i_k,j_1,\dots,j_{n-k}\}=\{1,\dots,n\}.$
\end{proposition}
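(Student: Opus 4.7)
My plan is to verify \rref{ekktate4} by a direct algebraic manipulation in $\widetilde{Q}$, then identify the simplified expression with the correct generator of $M$.

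The main tool is the identity $\beta(1+\tfrac{\beta}{2}x_i)=\tfrac{\beta}{2}(2+\beta x_i)$, which iterates to $\beta^k\prod_{s=1}^k(1+\tfrac{\beta}{2}x_{i_s})=(\tfrac{\beta}{2})^k\prod_{s=1}^k(2+\beta x_{i_s})$. Combining this with the defining relation $\prod_{l=1}^n(2+\beta x_l)=0$ of $\widetilde{Q}$ from \rref{ekktate+}, the first summand of the proposed formula collapses:
\[ \tfrac{\beta^k}{2}\prod_s(1+\tfrac{\beta}{2}x_{i_s})\prod_u(2+\beta x_{j_u}) = \tfrac{1}{2}(\tfrac{\beta}{2})^k\prod_{l=1}^n(2+\beta x_l) = 0 \text{ in } \widetilde{Q}. \]
Thus the right-hand side of \rref{ekktate4} reduces in $\widetilde{Q}$ to the simpler form
\[ y_{i_1\dots i_k} = -\tfrac{\beta^k}{2}\prod_{u=1}^{n-k}(2+\beta x_{j_u}). \]

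Next, I would verify the three requirements on a valid choice of generator. First, doubling gives $2y_{i_1\dots i_k}=-\beta^k\prod_u(2+\beta x_{j_u})$, a polynomial in $\beta$ and the $x$'s, and hence the image of an element of $k^*B(\Z/2)^n$; this places it in $P\subseteq Q$. Because $M=Q/P$ is $2$-torsion, this alone forces $y_{i_1\dots i_k}\in Q$. Second, each monomial in the simplified expression has internal degree $2k$, matching the bottom generator $\beta/2$ of the $I$-th $\Z[\beta/2]/\Z[\beta]$ summand shifted by $2k-2$. Third, I would locate $y_{i_1\dots i_k}$ in the $I$-indexed summand inductively on $n$, using the smash decomposition $k\wedge B(\Z/2)^n_+\simeq(k\wedge B(\Z/2)^{n-1}_+)\wedge_k(k\wedge B\Z/2_+)$ from Proposition~\ref{pbh}(\ref{pbhi3}): adjoining a new variable either places its index into $I$ (via the factor $1+\tfrac{\beta}{2}x$) or into the complement (via $2+\beta x$), matching the structure of the proposed formula.

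The main obstacle is the final identification step, proving that $y_{i_1\dots i_k}$ represents the generator at Tor-degree $k$ in the $I$-summand, rather than a lower-Tor-degree element or a $\beta$-multiple of another $y_{I'}$. I would handle this via the freeness statement for $P$ in Proposition~\ref{pkktate1}: the simplified $-\tfrac{\beta^k}{2}\prod_u(2+\beta x_{j_u})$, expanded in the monomial basis of $\widetilde{Q}$, cannot be written as a $\Z_2[\beta,\tfrac{2}{\beta}]$-combination of the free generators \rref{ettgena} and \rref{ettgenb}, which establishes non-triviality in $M$. The inductive recursion, combined with the behavior of the connecting map in Lemma~\ref{ltor1}(\ref{ltori4}) (which corresponds to multiplying by $\tfrac{\beta}{2}$ each time a new element is added to $I$), then pins down the Tor-degree.
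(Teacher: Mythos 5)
Your opening simplification is correct and even illuminating: using $\beta(1+\tfrac{\beta}{2}x_i)=\tfrac{\beta}{2}(2+\beta x_i)$ and the relation $\prod_{i=1}^n(2+\beta x_i)=0$ in $\widetilde{Q}$, the right-hand side of \rref{ekktate4} does equal $-\tfrac{\beta^k}{2}\prod_{u=1}^{n-k}(2+\beta x_{j_u})$ in $\widetilde{Q}$, which is consistent with the choice $y_{1\dots n}=-\tfrac{\beta^n}{2}$ made in the paper for $k=n$. But the step on which everything hinges --- that this element lies in $Q$ --- is not established by your argument. From $2y\in P\subseteq Q$ and the fact that $M=Q/P$ is $2$-torsion you cannot conclude $y\in Q$: the torsion statement says that $2q\in P$ for every $q\in Q$, and says nothing about elements of $\widetilde{Q}\smallsetminus Q$ whose doubles happen to lie in $P$ (already $P=Q=\Z\subset\widetilde{Q}=\Q$ gives a counterexample, with $y=\tfrac12$). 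Membership in $Q$ is precisely the topological content of the proposition: $Q$ is the image of the connective Tate theory in $\widetilde{Q}$, and which elements of $\widetilde{Q}$ it contains cannot be read off from the presentation \rref{ekktate+} together with the abstract structure of $P$ and $M$. Even for $n=k=1$, the fact that $-\tfrac{\beta}{2}\in Q$ is a statement about $\widehat{k}^{\Z/2}$ taken from \cite{gm}, not a consequence of $\beta\in P$.

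The identification step is also only sketched. Showing that $-\tfrac{\beta^k}{2}\prod_u(2+\beta x_{j_u})$ is not a $\Z_2[\beta,\tfrac{2}{\beta}]$-combination of the generators of Proposition \ref{pkktate1} only proves its class in $M$ is nonzero; you still need it to be a legitimate choice of the Tor-degree-$k$ generator of the summand indexed by $\{i_1,\dots,i_k\}$ in \rref{ekktate3}, rather than, say, a $\tfrac{\beta}{2}$-power multiple of a generator attached to a different index set, and the appeal to the connecting map of Lemma \ref{ltor1}\tref{ltori4} is not developed into an argument. The paper supplies the missing input by a different mechanism: an induction on $n$ in which, for $k<n$, the generator is chosen as a restriction from a rank-$k$ subgroup, and the key topological fact is that the corresponding corestriction into the rank-$n$ theory acts as multiplication by $\prod_{u=1}^{n-k}(2+\beta x_{j_u})$; the case $k=n$ is then forced by dimension together with the fact that $Q\subset\widetilde{Q}$ is torsion-free in $\beta$ and $2$. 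Some such restriction/corestriction (or equivalent topological) input is needed; as it stands, the purely algebraic route does not close either gap.
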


\begin{proof}
An induction on $n$. For $n=1$, we know from \cite{gm} that
$$y_1=\frac{\beta^2}{4}x_1=\frac{\beta}{2}((1+\frac{\beta}{2}x_1)-1).$$
Now for $k<n$, the element $y_{i_1\dots i_k}$ can be chosen as a restriction from
$k_*B(\Z/2)^k$ for some subgroup of $(\Z/2)^n$ isomorphic to $(\Z/2)^k$. Thus,
in $Q$, it will be given by corestriction of the same element from the same subgroup, which
is given by multiplication by
$$
\prod_{u=1}^{n-k}(2+\beta x_{j_u}).
$$
Hence, our statement for $y_{i_1\dots i_k}$, $k<n$, follows from the induction hypothesis. For $k=n$,
on the other hand, we can choose 
$$y_{i_1\dots i_n}=-\frac{\beta^n}{2}$$
simply by its dimension and the fact that $Q\subset\widetilde{Q}$ is a submodule (and thus, in particular,
free of torsion in $\beta$ and $2$).
\end{proof}

\vspace{3mm}

Thus, it remains to calculate the extension determined by the relations \rref{ekktate4}. To this end, 
it is helpful to change variables by putting
\beg{ekktate5}{t_i=x_i+\frac{2}{\beta}.
}

\begin{lemma}\label{lchv}
The $\Z_2[\beta,\displaystyle\frac{2}{\beta}]$-module $P$ is free on the generators
\beg{ekktate6}{
\begin{array}{l}
\displaystyle
z_{i_1\dots i_k}=t_{i_1}\dots t_{i_k}-\left(\frac{2}{\beta}\right)^k,\; 1\leq k<n, \; 1\leq i_1<\dots<i_k\leq n,\\[1ex]
\displaystyle
z_{1\dots n}=\left(\frac{\beta}{2}\right)^nt_1\dots t_n -1.
\end{array}
}
\end{lemma}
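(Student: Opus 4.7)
The plan is to observe that the substitution $t_i = x_i + \frac{2}{\beta}$ induces an invertible $\Z_2[\beta,\frac{2}{\beta}]$-linear change of basis between the generators \rref{ettgena}, \rref{ettgenb} of $P$ provided by Proposition \ref{pkktate1} and the proposed generators \rref{ekktate6}. Once this is established, freeness on the new generators follows immediately from Proposition \ref{pkktate1}.

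First I would verify that each $z_{i_1\dots i_k}$ lies in $P$. The top case $k=n$ is immediate: substituting $t_i = x_i + \frac{2}{\beta}$ gives
\[
z_{1\dots n} = \left(\frac{\beta}{2}\right)^n \prod_{i=1}^n \left(x_i + \frac{2}{\beta}\right) - 1 = \prod_{i=1}^n \left(1 + \frac{\beta}{2}x_i\right) - 1,
\]
which is literally the generator \rref{ettgenb}. For $1 \leq k < n$, expansion gives
\[
z_{i_1\dots i_k} = \prod_{s=1}^k\left(x_{i_s} + \frac{2}{\beta}\right) - \left(\frac{2}{\beta}\right)^k = \sum_{\emptyset \neq S \subseteq \{i_1,\dots,i_k\}} \left(\frac{2}{\beta}\right)^{k-|S|} \prod_{s\in S} x_{s},
\]
a $\Z_2[\beta,\frac{2}{\beta}]$-linear combination of monomials $x_{s_1}\cdots x_{s_m}$ of lengths $1 \leq m \leq k < n$. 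Each such monomial is one of the generators \rref{ettgena}, so $z_{i_1\dots i_k} \in P$.

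For the reverse direction, I would express the old generators in terms of the new ones. Writing $x_i = t_i - \frac{2}{\beta}$ and expanding for $k<n$,
\[
x_{i_1}\cdots x_{i_k} = \sum_{S \subseteq \{i_1,\dots,i_k\}} \left(-\frac{2}{\beta}\right)^{k-|S|} \prod_{s \in S} t_s
= \sum_{\emptyset \neq S}\left(-\frac{2}{\beta}\right)^{k-|S|} z_{i_S} + \left(\frac{2}{\beta}\right)^k \sum_{S}(-1)^{k-|S|},
\]
where I have used $\prod_{s\in S} t_s = z_{i_S} + (2/\beta)^{|S|}$ for $\emptyset \neq S$ with $|S|<n$. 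The combinatorial identity $\sum_{S \subseteq \{1,\dots,k\}}(-1)^{k-|S|} = (1-1)^k = 0$ for $k \geq 1$ guarantees that the constant term cancels, so $x_{i_1}\dots x_{i_k}$ is a $\Z_2[\beta,\frac{2}{\beta}]$-linear combination of the new generators. The inversion of \rref{ettgenb} is trivial, since that generator equals $z_{1\dots n}$.

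The main obstacle is purely bookkeeping: one must confirm that the ``constant'' contributions, which a priori only live in the ambient $\widetilde{Q}$, actually vanish so that both sides of the change-of-variables matrix have entries in $\Z_2[\beta,\frac{2}{\beta}]$ and so that the exchange descends to $P$ itself. The vanishing identity $\sum_{j=0}^k \binom{k}{j}(-1)^{k-j}=0$ for $k\geq 1$, together with the identification of $z_{1\dots n}$ with the ``Euler class'' generator \rref{ettgenb}, handles all the necessary cancellations, and the resulting triangular change of basis is visibly invertible, completing the proof.
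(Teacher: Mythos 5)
Your proof is correct and follows essentially the same route as the paper: the substitution $t_i=x_i+\frac{2}{\beta}$ gives a base change between the generators of Proposition \ref{pkktate1} and those of \rref{ekktate6} whose matrix is triangular with units on the diagonal (your expansion shows the diagonal entries are $1$ and $z_{1\dots n}$ coincides with \rref{ettgenb}), which is exactly the paper's one-line argument. Your explicit verification of the reverse expressions and the cancellation $\sum_{S}(-1)^{k-|S|}=0$ is a harmless elaboration of the same point.
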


\begin{proof}
The base change matrix between the generators \rref{ekktate6} and those of Proposition \ref{pkktate1} is
triangular with invertible elements on the diagonal.
\end{proof}

\vspace{3mm}

\begin{proposition}\label{pchv}
The generators $y_{i_1\dots i_k}$ of Proposition \ref{pchoice1} can be changed to generators
$y_{i_1\dots i_k}^\prime$ such that in $Q$, we have
$$y_{i_1\dots i_k}^\prime =\frac{\beta^n}{2}\cdot z_{i_1,\dots i_k},\; 1\leq k\leq n,\; 1\leq
i_1<\dots<i_k\leq n.$$
\end{proposition}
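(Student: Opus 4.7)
The plan is to substitute the change of variables $t_i = x_i + 2/\beta$ into the formula \rref{ekktate4} for $y_{i_1\dots i_k}$, use the ring relation $\prod_{i=1}^n(2+\beta x_i)=0$ in $\widetilde{Q}$ (see \rref{ekktate+}) to simplify, and then re-express the answer in terms of the $z$-generators from Lemma \ref{lchv}.

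First I would exploit the identities $1+(\beta/2)x_i=(\beta/2)t_i$ and $2+\beta x_i=\beta t_i$ (immediate from $t_i=x_i+2/\beta$) to rewrite
$$y_{i_1\dots i_k}=\frac{\beta^n}{2}\left((\beta/2)^k t_1\cdots t_n-\prod_{u=1}^{n-k}t_{j_u}\right).$$
The crucial observation is that $\prod(2+\beta x_i)=0$ translates to $\beta^n t_1\cdots t_n=0$ in $Q$, and consequently $\beta^{n+k}t_1\cdots t_n=\beta^k\cdot\beta^n t_1\cdots t_n=0$ for every $k\geq 0$. This kills the first summand entirely, leaving
$$y_{i_1\dots i_k}=-\frac{\beta^n}{2}\prod_{u=1}^{n-k}t_{j_u}.$$

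For $1\leq k<n$ I substitute $\prod_u t_{j_u}=z_{j_1\dots j_{n-k}}+(2/\beta)^{n-k}$ from \rref{ekktate6}, obtaining
$$y_{i_1\dots i_k}=-\frac{\beta^n}{2}\,z_{j_1\dots j_{n-k}}-2^{n-k-1}\beta^k.$$
The correction term $2^{n-k-1}\beta^k$ lies in $P$: the same vanishing $\beta^n t_1\cdots t_n=0$ forces $z_{1\dots n}=-1$ in $Q$, so $2^{n-k-1}\beta^k=-2^{n-k-1}\beta^k\cdot z_{1\dots n}$ is a $\Z[\beta,2/\beta]$-multiple of a free generator of $P$ from Lemma \ref{lchv}. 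For $k=n$ the empty product gives $y_{1\dots n}=-\beta^n/2=\frac{\beta^n}{2}z_{1\dots n}$ on the nose.

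With this in hand I would define $y'_{1\dots n}:=y_{1\dots n}$ and, for $1\leq|I|<n$,
$$y'_I:=-y_{I^c}-2^{|I|-1}\beta^{n-|I|},$$
where $I^c$ denotes the complement in $\{1,\dots,n\}$. Applying the boxed identity above with the roles of $I$ and $I^c$ swapped then shows $y'_I=\frac{\beta^n}{2}z_I$. To verify that $\{y'_I\}$ is still a valid set of generators for the extension $0\to P\to Q\to M\to 0$, I note that the correction lies in $P$ and $M$ is $2$-torsion, so modulo $P$ we have $y'_I\equiv -y_{I^c}\equiv y_{I^c}$; the map $I\mapsto I^c$ is merely a relabeling of the $M$-summands. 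The main obstacle is noticing that the relation $\beta^n t_1\cdots t_n=0$ collapses the formula for $y_I$ onto only the complementary product $\prod_u t_{j_u}$, so the resulting identity naturally pairs the Tor-basis element indexed by $I$ with the $z$-basis element indexed by $I^c$ rather than by $I$ itself.
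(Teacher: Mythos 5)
Your proposal is correct and is essentially the paper's own argument: your new generators $y'_I=-y_{I^c}-2^{|I|-1}\beta^{\,n-|I|}$ coincide exactly with the paper's $\bigl(\tfrac{2}{\beta}\bigr)^{n-|I|}y_{1\dots n}-y_{I^c}$ (since $y_{1\dots n}=-\beta^n/2$), the only difference being that the paper obtains the identity $y'_I=\tfrac{\beta^n}{2}z_I$ by formal cancellation against a $\bigl(\tfrac{2}{\beta}\bigr)^{n-|I|}$-multiple of $y_{1\dots n}$ rather than by invoking the relation $t_1\cdots t_n=0$ up front. The one blemish is the assertion that $M$ is $2$-torsion (it is not as a graded module; only the bottom classes of the summands are), but this is inessential, since replacing generators by their negatives and relabeling indices by complementation is harmless, exactly as in the paper's proof.
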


\begin{proof}
By Proposition \ref{pchoice1}, applying the base change, we have in $Q$:
$$y_{i_1\dots i_k}=\frac{\beta^n}{2}\cdot\left( \protect
(\frac{\beta}{2})^k
\prod_{s=1}^{k} t_{i_s}-1
\right)
\prod_{u=1}^{n-k} t_{j_u}.
$$
For $k=n$, this is the desired generator. For $1\leq k<n$, we get the desired generator by 
subtracting $y_{i_1\dots i_k}$ from 
$$\frac{\beta^n}{2}(\frac{2}{\beta})^{n-k}\left(\protect
(\frac{\beta}{2})^n\prod_{s=1}^{n}t_s-1
\right),$$
which is in the image of $\displaystyle (\frac{2}{\beta})^{n-k}y_{1\dots n}$. (Note that for $1\leq k
<n$, the sets $\{i_1<\dots< i_k\}$ are in bijective correspondence with their complements.)
\end{proof}

\vspace{3mm}
Let $Q_n$ be the $\Z_2[\beta,\displaystyle\frac{2}{\beta}]$-submodule of $\Q_2[\beta,\beta^{-1}]$
generated by $1,\beta^{n-1}(\displaystyle \frac{\beta}{2})^i$, $i\in \N$. Then, as remarked,
we have a unique $k$-module $\mathscr{Q}_n$ with coefficients $Q_n$. We have proved

\vspace{3mm}

\begin{theorem}\label{ttt}
For $n\geq 0$, the Tate cohomology \rref{ekktate} is isomorphic, as a $k$-module, to
$$
\mathscr{Q}_n\cdot ((1+w^{-2})^n-w^{-2n})\vee H\cdot f(w)
$$
where 
$$\begin{array}{l}
\displaystyle 
f(w)=
\frac{w}{(1-w^2)^2(1-w)^{n-1}}\left(
\frac{1-(1+w^2)^n(1-w)^n}{1-(1+w^2)(1-w)}-\frac{1-(1-w)^n}{1-(1-w)}
\right)+
\\[3ex]
\displaystyle
\frac{1}{(1-w^{-2})^2(1-w^{-1})^{n-1}}\left(
\frac{1-(1+w^{-2})^n(1-w^{-1})^n}{1-(1+w^{-2})(1-w^{-1})}-\frac{1-(1-w^{-1})^n}{1-(1-w^{-1})}
\right).

\end{array}$$
\end{theorem}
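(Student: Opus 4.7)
My plan is to assemble this section's results via the Greenlees--May cofibration sequence
$$
k\wedge B(\Z/2)^n_+ \xrightarrow{N} F(B(\Z/2)^n_+, k) \to \widehat{k}^{(\Z/2)^n},
$$
plug in Propositions~\ref{pbh}(\ref{pbhi3}) and~\ref{pbc}, and split the calculation into a polynomial piece (involving $k,\mathscr{M}$/$\mathscr{P}$) and an $H$-piece. The splitting is forced by the absence of nontrivial $k$-module extensions involving $H$-summands in non-negative degrees, as noted in the paragraph preceding Lemma~\ref{lkktate1} and following from the self-map computation in the proof of Lemma~\ref{lhm}; consequently $\widehat{k}^{(\Z/2)^n}$ itself splits into such a wedge.

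For the polynomial piece, the coefficient module $Q$ has been determined by Propositions~\ref{pkktate1}, \ref{pchoice1}, Lemma~\ref{lchv}, and Proposition~\ref{pchv}. After the change of variables $t_i = x_i + 2/\beta$, each non-empty subset $I\subset\{1,\dots,n\}$ of size $k$ contributes a pair of classes $z_I\in P$ (in degree $-2k$ when $k<n$, in degree $0$ when $k=n$) and $y'_I = (\beta^n/2)z_I\in Q$, satisfying the single relation $2y'_I = \beta^n z_I$. The $\Z_2[\beta, 2/\beta]$-submodule generated by $(z_I,y'_I)$ is isomorphic to a shift of $Q_n$ with $z_I\leftrightarrow 1$ and $y'_I\leftrightarrow \beta^n/2$. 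Summing over $I$, with shifts $w^{-2k}$ for $1\le k<n$ and $w^0$ for $k=n$, gives the coefficient polynomial $\binom{n}{n}w^0+\sum_{k=1}^{n-1}\binom{n}{k}w^{-2k}=(1+w^{-2})^n - w^{-2n}$. Since $Q_n$ has Tor-dimension $\le 1$ over $\Z[\beta]$, the uniqueness of degree-$0,1$-resolutions realized as $k$-modules produces a unique $\mathscr{Q}_n$ with $\pi_\ast\mathscr{Q}_n = Q_n$, and the polynomial piece is $\mathscr{Q}_n\cdot((1+w^{-2})^n - w^{-2n})$.

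For the $H$-piece, the Poincar\'e series $f(w)$ is determined by the long exact sequence on $\pi_\ast$: since $Q_n\subset\widetilde{Q}$ is $\Z$-torsion-free, the polynomial piece contributes only to the torsion-free part of $\pi_\ast\widehat{k}^{(\Z/2)^n}$, while the $H$-part is recovered as the $2$-torsion. Concretely, the $H$-summands of $k\wedge B(\Z/2)^n_+$ live in positive even degrees (Poincar\'e series equal to the $q_n$ of Proposition~\ref{pbh}(\ref{pbhi3})) and those of $F(B(\Z/2)^n_+, k)$ in negative even degrees (Poincar\'e series $q_n^*$ from Proposition~\ref{pbc}), so by additivity of $\pi_\ast$ the $H$-cofiber has Poincar\'e series $q_n^* + w\cdot q_n$, the factor $w$ coming from the Puppe-shift of the source term into the cofiber. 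Substituting the explicit formulas for $q_n$ and $q_n^*$ recovers exactly the $f(w)$ of the statement.

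The main obstacle is verifying that the coefficient extension $0\to P\to Q\to M\to 0$ splits index-wise into shifts of $Q_n$. This relies on both the change of variables of Lemma~\ref{lchv} and the explicit formula $y'_I = (\beta^n/2) z_I$ of Proposition~\ref{pchv}: together they show that each pair $(z_I, y'_I)$ generates an indecomposable shift of $Q_n$ and that the pairs for different $I$ remain independent of one another in the ambient module.
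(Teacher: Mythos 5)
Your proposal follows the paper's own proof essentially verbatim: the Greenlees--May cofibration sequence fed with Propositions \ref{pbh}(\ref{pbhi3}) and \ref{pbc}, the splitting-off of the $H$-summands for degree/extension reasons (contributing $q_n^{*}+w\,q_n$, i.e.\ the two terms of $f(w)$), and the identification of $Q$ as a direct sum of copies of $Q_n$ shifted by the degrees of the $z_I$ via Propositions \ref{pkktate1}, \ref{pchoice1}, Lemma \ref{lchv} and Proposition \ref{pchv}, which yields the coefficient $(1+w^{-2})^n-w^{-2n}$. (One cosmetic slip: for $n\geq 3$ the $H$-summands occur in odd as well as even degrees, but your argument only uses that they lie in positive, respectively negative, degrees, so nothing is affected.)
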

\qed

\vspace{3mm}
The structure of $\mathscr{Q}_n$ as a spectrum is clarified by the following

\vspace{3mm}
\begin{proposition}\label{ppost}
As spectra, we have:
\beg{epost1}{k^\wedge_2[\frac{2}{\beta}]=k^\wedge_2\vee\bigvee_{n\geq 1} H\Z_2[-2n],
}
\beg{epost2}{k^\wedge_2[\frac{2}{\beta},\frac{\beta}{2}]=\bigvee_{n\in \Z}H\Z_2[2n],
}
\beg{epost3}{\mathscr{Q}_n=\bigvee_{i\geq 1} H\Z_2[-2i]\vee
\tau_{\leq 2n-2}k^\wedge_2\vee\bigvee_{i\geq 0} H\Z_2[2n+2i]
}
where $\tau_{\leq m}X$ of a spectrum $X$ denotes the part of the Postnikov tower up to and
including the $m$'th homotopy group.
\end{proposition}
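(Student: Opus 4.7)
My plan is to prove \rref{epost3} in full, from which \rref{epost2} follows as the case $n=1$ (noting $\tau_{\le 0} k^\wedge_2 = H\Z_2$), and \rref{epost1} follows by the analogous argument applied to $k^\wedge_2[\frac{2}{\beta}]$, where the upper tail is absent and the middle extends to all of $k^\wedge_2$. The first step will be to verify the $\pi_*$-matching: by directly expanding the generators of $Q_n$ inside $\mathbb{Q}_2[\beta^{\pm 1}]$, one finds $\pi_{2k}(\mathscr{Q}_n) \cong \Z_2$ for every $k \in \Z$ (and zero in odd degrees), with explicit generators $(2/\beta)^{-k}$ for $k \le -1$, $\beta^k$ for $0 \le k \le n-1$, and $\beta^k/2^{k-n+1}$ for $k \ge n$; this agrees with the $\pi_*$ of the proposed wedge.

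Next I would identify the middle summand and the two tails. For the middle, the $k^\wedge_2$-module map $k^\wedge_2 \to \mathscr{Q}_n$ given by $1 \mapsto 1 \in Q_n$ is an isomorphism on $\pi_{2k}$ for $0 \le k \le n-1$, producing the equivalence $\tau_{\le 2n-2} \mathscr{Q}_n \simeq \tau_{\le 2n-2} k^\wedge_2$ as spectra. For the upper tail, shifting $\tau_{\ge 2n} \mathscr{Q}_n$ down by $2n$ yields a spectrum $Y$ with $\pi_{2i}(Y) = \Z_2$ for $i \ge 0$, on which $\beta$ acts as multiplication by $2$ between successive generators. There is a natural $k^\wedge_2$-module map $k^\wedge_2 \to Y$ sending $\beta^i$ to $2^i$ times the generator of $\pi_{2i}(Y)$; this map is an iso on $\pi_0$ and multiplication by $2^i$ on $\pi_{2i}$. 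Because the Postnikov $k$-invariants of $k^\wedge_2$ are all $2$-torsion (they factor through mod-$2$ reduction, the first being a class like $\delta\, Sq^2\, \rho$), by naturality the pulled-back $k$-invariants of $Y$ are twice a $2$-torsion class, hence zero; so $Y \simeq \bigvee_{i \ge 0} H\Z_2[2i]$, and shifting back yields the upper tail. The lower tail $\tau_{\le -2} \mathscr{Q}_n \simeq \bigvee_{i \ge 1} H\Z_2[-2i]$ follows by the symmetric argument.

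Finally, for the assembly: the three pieces combine via the cofiber sequences relating $\mathscr{Q}_n$ to its Postnikov truncations, and the splitting obstructions live in groups of the form $[\bigvee H\Z_2[m], Z]$ for spectra $Z$ supported outside a neighborhood of $m$, which vanish because $H\Z_2$ is $\pi_*$-concentrated in a single degree. The hard part will be the tail analysis --- carefully tracking how the $2$-torsion Postnikov $k$-invariants of $k^\wedge_2$ interact, under naturality, with the altered $\beta$-action on the tails of $\mathscr{Q}_n$ (where $\beta$-multiplication on generators is literally multiplication by $2$) to force actual vanishing of the pulled-back $k$-invariants. This rests on the classical fact that the Postnikov $k$-invariants of connective $K$-theory are $2$-torsion, combined with the $2$-divisibility of the tail generators of $\mathscr{Q}_n$ inside $\mathbb{Q}_2[\beta^{\pm 1}]$.
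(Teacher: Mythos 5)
Your route (computing $\pi_*\mathscr{Q}_n$, splitting off a middle $\tau_{\leq 2n-2}k^\wedge_2$ and two tails, and killing the $k$-invariants of the tails by comparison with $k^\wedge_2$) is genuinely different from the paper's, but the two steps that carry all the weight have real gaps. First, the tail argument: naturality for your $k^\wedge_2$-module map $f\colon k^\wedge_2\to Y$ only gives $(\tau_{\leq m}f)^*k_{m+1}(Y)=(f_*)_{\#}k_{m+1}(k^\wedge_2)$, i.e.\ it shows that the \emph{pullback} of the $k$-invariant of $Y$ vanishes. To conclude $k_{m+1}(Y)=0$ you need $(\tau_{\leq m}f)^*$ to be injective on $H^{m+2}(\tau_{\leq m}Y;\pi_{m+1}Y)$. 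That holds at the first stage, where $\tau_{\leq 0}f$ is an equivalence, but already at the next stage $\tau_{\leq 2}f$ is multiplication by $2$ on $\pi_2$: a class supported on the $H\Z_2[2]$ summand of $\tau_{\leq 2}Y\simeq H\Z_2\vee H\Z_2[2]$ pulls back along a map $\tau_{\leq 2}k^\wedge_2\to H\Z_2[2]$ whose restriction to the fibre is multiplication by $2$, which annihilates all $2$-torsion operations; so injectivity is exactly what has to be proved and is not addressed. (In addition, the assertion that \emph{all} Postnikov $k$-invariants of $k^\wedge_2$ are $2$-torsion is used as a known fact, but only the first one is standard; the higher ones would need an argument or reference.)

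Second, the assembly step is not formal: groups of the form $[H\Z_2[m],Z]$ do not vanish just because $H\Z_2$ has homotopy concentrated in one degree --- maps out of Eilenberg--Mac\,Lane spectra into higher Postnikov ranges are Steenrod-type operations and are abundant; indeed $\tau_{\leq 0}k^\wedge_2=H\Z_2$ does \emph{not} split off $k^\wedge_2$ even though the complementary piece is concentrated in degrees $\geq 2$. So both the tail splittings and the final gluing require input beyond degree bookkeeping, presumably the $k$-module structure. By contrast, the paper sidesteps obstruction theory: it constructs explicit $k$-module maps from $k^\wedge_2[\frac{2}{\beta}]$ to the candidate wedge smashed with the mod $2$ Moore spectrum, deduces the integral statement from the Bockstein spectral sequence (whose $E_2$-term is even, hence no higher differentials), obtains the periodic case by localization, and then realizes $\mathscr{Q}_n$ as a homotopy pushout of $k$-modules, which yields \rref{epost3}. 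Your computation of $Q_n$ and the three-range picture agree with that, but the vanishing claims that drive your splitting are the missing content.
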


\begin{proof}
For \rref{epost1}, we have a map of $k$-modules
\beg{epost+}{
k^\wedge_2[\frac{2}{\beta}]\r (k^\wedge_2\vee\bigvee_{n\geq 1} H\Z[-2n])\wedge M\Z/2,
}
mapping to each summand by killing the other generators. In spectra, the map \rref{epost+} survives
the $d_1$ of the corresponding Bockstein spectral sequence, which has no higher differentials (since the
$E_2$-term is concentrated in even degrees),
thus proving \rref{epost1}. Then \rref{epost2} follows by localization. For \rref{epost3}, it then suffices
to remark that we have a homotopy pushout of $k$-modules (and hence spectra)
$$\diagram
k^\wedge_2[\frac{2}{\beta}][2n-2]\dto_{\beta^{n-1}}\rto &
k^\wedge_2[\frac{2}{\beta},\frac{\beta}{2}][2n-2]\dto \\
k^\wedge_2[\frac{2}{\beta}]\rto & \mathscr{Q}_n.
\enddiagram
$$

\end{proof}

\vspace{3mm}
\section{The case of $p>2$}\label{sp}

The case of $G=(\Z/p)^n$ for $p>2$ prime is directly analogous (or more precisely a direct generalization).
Nevertheless, it still introduces some new effects. For one thing, while not necessary, it is convenient to localize
at $p$, which is possible in the category of $E_\infty$-ring spectra, so we have an $E_\infty$-ring spectrum
$k_{(p)}$. We may start out with this spectrum, since the Tate cohomology is $p$-complete anyway.
The reason it is convenient to localize is that then we can make the multiplicative group $p$-typical,
with $p$-series 
$$[p]x=px +\beta^{p-1}x^p$$
(where $[p]x$ denotes the $p$-series).
For Borel homology and cohomology, the basic building
blocks are the $\Z_{(p)}[\beta]$-modules
$$N=\Z_{(p)}[\beta,\frac{\beta^{p-1}}{p}],$$
$$M=(N/\Z_{(p)}[\beta])[2(2-p)],$$
$$\widetilde{H}=\Z_{(p)}[\beta]/(p,\beta^{p-1}).$$
The reason for the shift is that that way, the bottom degree element is in dimension $2$, and other summands we
encounter are shifts in the positive dimension, which seems more natural.

The modules $M,N$ have resolutions in degrees $0,1$, so they are uniquely realized by $k_{(p)}$-modules
$\mathscr{M}$, $\mathscr{N}$. The module $\widetilde{H}$ is realized by the $k_{(p)}$-module
$\widetilde{\mathscr{H}}=k_{(p)}/(p,\beta^{p-1})$ (i.e. by killing a regular sequence). We then have
\beg{epborel1}{k_{(p)}\wedge B\Z/p_+=k_{(p)}\vee \mathscr{M}[-1](1+w^2+\dots w^{2(p-2)}).
}
Directly analogously to Lemma \ref{ltori1}, one proves

\begin{lemma}\label{lptori1}
\begin{enumerate}

\item\label{lptori2}
We have
$$N\otimes_{\Z_{(p)}[\beta]}N\cong N\oplus \widetilde{H}\frac{w^{2(p-1)}}{(1-w^{2(p-1)})^2},\; Tor_1(N,N)=0.$$

\item\label{lptori3}
We have
$$N\otimes_{\Z_{(p)}[\beta]}M\cong \widetilde{H}\frac{w^{2(p-1)}}{(1-w^{2(p-1)})^2}, \;Tor_1(M,N)=0.$$
\item\label{lptori4}
We have
$$M\otimes_{\Z_{(p)}[\beta]}M\cong \widetilde{H}\frac{w^4}{(1-w^{2(p-1)})^2},\; Tor_1(M,M)\cong M[2].$$

\end{enumerate}
\end{lemma}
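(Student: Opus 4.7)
The plan is to follow the proof of Lemma \ref{ltor1} almost verbatim, with the substitutions $2 \leadsto p$ and $\beta \leadsto \beta^{p-1}$, and with careful bookkeeping of the degree shift built into $M = (N/\Z_{(p)}[\beta])[2(2-p)]$.

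First I would record the $\Z_{(p)}[\beta]$-free resolution of $N$ analogous to \rref{eltor1},
$$0 \to \bigoplus_{i \geq 1} \Z_{(p)}[\beta]\{t_i\} \to \bigoplus_{i \geq 0} \Z_{(p)}[\beta]\{z_i\} \to N \to 0,$$
with $|z_i|=|t_i|=2i(p-1)$, differential $t_i \mapsto p z_i - \beta^{p-1} z_{i-1}$, and augmentation $z_i \mapsto (\beta^{p-1}/p)^i$. Adjoining $t_0 \mapsto z_0$ gives a resolution of $N/\Z_{(p)}[\beta] = M[2(p-2)]$. For Statement \ref{lptori2}, I would apply $? \otimes_{\Z_{(p)}[\beta]} N$ to this resolution. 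Since $N \subset \Q[\beta]$ is torsion-free, the resulting map is injective, giving $Tor_1(N,N) = 0$. For the cokernel, the substitution $v_0 = z_0$, $v_i = z_i - (\beta^{p-1}/p) z_{i-1}$ for $i \geq 1$ converts the relations into $p v_i = 0$, yielding
$$N \otimes_{\Z_{(p)}[\beta]} N \cong N\{v_0\} \oplus \bigoplus_{i \geq 1} (N/pN)\{v_i\}.$$
The key point is that $N/pN$ is a free $\widetilde{H}$-module on the classes $(\beta^{p-1}/p)^j$: since $\beta^{p-1} = p \cdot (\beta^{p-1}/p) \equiv 0 \pmod p$ in $N/pN$, the $\Z_{(p)}[\beta]$-action reduces to an $\widetilde{H}$-action, and the $(\beta^{p-1}/p)^j$ provide a free basis in degrees $2j(p-1)$.

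Statement \ref{lptori3} would then follow by feeding the short exact sequence $0 \to \Z_{(p)}[\beta] \to N \to M[2(p-2)] \to 0$ into the long exact sequence of $Tor(?,N)$: the map $N = \Z_{(p)}[\beta] \otimes_{\Z_{(p)}[\beta]} N \to N \otimes_{\Z_{(p)}[\beta]} N$ is exactly the inclusion of the $N\{v_0\}$ summand from Statement \ref{lptori2}, so $Tor_1(M,N) = 0$ and the remainder identifies $M \otimes N$ with the $\widetilde{H}$-part (modulo the degree shift). For Statement \ref{lptori4}, I would feed the same short exact sequence into $Tor(?, M)$ and use Statement \ref{lptori3} to obtain
$$0 \to Tor_1(M,M) \to M \xrightarrow{\gamma} \widetilde{H}\frac{w^{2(p-1)}}{(1-w^{2(p-1)})^2} \to M \otimes_{\Z_{(p)}[\beta]} M \to 0,$$
with $\gamma$, as in the $p=2$ case, the identity on the $\widetilde{H}$-summands corresponding to the $z_i$-generators with $i \geq 1$ and zero elsewhere.

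The main obstacle is verifying, in this last step, that $\ker(\gamma) \cong M[2]$ and that the cokernel comes out to exactly $\widetilde{H}\tfrac{w^4}{(1-w^{2(p-1)})^2}$. This requires careful tracking of the intrinsic shift $[2(2-p)]$ in $M$ against the degrees $2i(p-1)$ of the resolution generators, and identifying which subquotient of $M$ maps injectively and which is annihilated. Apart from this bookkeeping, every step is a mechanical translation of the $p=2$ argument, with the $\Z_{(p)}$-basis $\{\beta^{a+b(p-1)}/p^b : a,b \geq 0\}$ of $N$ playing the role of the basis $\{\beta^n/2^n\}$ used there.
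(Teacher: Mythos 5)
Your strategy is the right one, and it is what the paper itself intends (the paper gives no details, asserting the lemma ``directly analogously'' to Lemma \ref{ltor1}); your resolution of $N$, the injectivity giving $Tor_1(N,N)=0$, the change of basis $v_i=z_i-(\beta^{p-1}/p)z_{i-1}$, and the identification $N/pN\cong\bigoplus_{b\ge 0}\widetilde{H}\{(\beta^{p-1}/p)^b\}$ are all correct. But the one step you defer --- ``careful tracking of the intrinsic shift $[2(2-p)]$'' --- is precisely the only content here that is not a verbatim transcription of the $p=2$ argument, and your proof does not close it. Concretely, tensoring $0\to\Z_{(p)}[\beta]\to N\to M[2(p-2)]\to 0$ with $M$ produces a four-term sequence whose outer terms are $Tor_1(M,M)[2(p-2)]$ and $(M\otimes_{\Z_{(p)}[\beta]}M)[2(p-2)]$, not the unshifted ones you display. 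The clean way to finish is to work throughout with the unshifted module $M'=N/\Z_{(p)}[\beta]$: one gets $N\otimes M'\cong\widetilde{H}\frac{w^{2(p-1)}}{(1-w^{2(p-1)})^2}$, $\ker(\gamma)=p\cdot M'$, and the observation you are missing is that multiplication by $\beta^{p-1}$ is an isomorphism of $M'$ onto $p\cdot M'$ raising degree by $2(p-1)$, so $Tor_1(M',M')\cong M'[2(p-1)]$; counting one copy of $\Z/p$ per even degree in the image of $\gamma$ gives $M'\otimes M'\cong\widetilde{H}\frac{w^{4(p-1)}}{(1-w^{2(p-1)})^2}$. Applying the shift $[2(2-p)]$ (twice for the tensor square) then yields $Tor_1(M,M)\cong M[2]$ and $M\otimes M\cong\widetilde{H}\frac{w^{4}}{(1-w^{2(p-1)})^2}$, i.e. item (3) as stated.

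Doing this bookkeeping also exposes something you should flag rather than reproduce: with the shift built into $M$, item (2) as printed is off by $w^{2(2-p)}$ --- the computation gives $N\otimes_{\Z_{(p)}[\beta]}M\cong\widetilde{H}\frac{w^{2}}{(1-w^{2(p-1)})^2}$ (bottom class in degree $2$, matching the bottom class of $M$), while the printed formula $\widetilde{H}\frac{w^{2(p-1)}}{(1-w^{2(p-1)})^2}$ is the answer for the unshifted $M'$. For $p=2$ the two coincide, which is why the issue is invisible in Lemma \ref{ltor1}; items (1) and (3), and everything used later in the paper, are consistent with the shifted $M$. Your displayed exact sequence inherits this inconsistency and is degreewise impossible as written: in degree $2$ the map $\gamma$ would have nonzero source and zero target, yet $Tor_1(M,M)\cong M[2]$ begins only in degree $4$. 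A small final point: $\{\beta^{a+b(p-1)}/p^{b}\}$ is a $\Z_{(p)}$-basis of $N$ only if you restrict to $0\le a\le p-2$; without that restriction it is merely a spanning set.
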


\qed

\vspace{3mm}
On $k_{(p)}$-modules, one then gets
$$\mathscr{M}\wedge_{k_{(p)}}\mathscr{M}=\mathscr{M}[3]\vee \widetilde{\mathscr{H}}
\frac{w^4}{(1-w^{2(p-1)})^2},
$$
$$\mathscr{M}\wedge_{k_{(p)}}\widetilde{\mathscr{H}}=\widetilde{\mathscr{H}}\frac{w^2(1+w)}{1-w^{2(p-1)
}}.$$
Using this, putting
$$k_{(p)}\wedge B(\Z/p)^i_+=k_{(p)}\vee \mathscr{M}p_i\vee\widetilde{\mathscr{H}}q_i,$$
one has $p_1=w^{-1}(1+w^2+\dots+w^{2(p-2)})$, $q_1=0$, and computes
$$\begin{array}{l}
(k_{(p)}\vee\mathscr{M}p_i\vee\widetilde{\mathscr{H}}q_i))\wedge_{k_{(p)}}
(k_{(p)}\vee \mathscr{M}w^{-1}(1+w^2+\dots+w^{2(p-2)}))=\\[2ex]
k_{(p)}\vee\mathscr{M}((1+w^2+\dots+w^{2(p-1)})p_i+w^{-1}(1+w^2+\dots+w^{2(p-2)}))\vee\\[2ex]
\displaystyle
\widetilde{\mathscr{H}}\left(
p_i\frac{w^{3}}{(1-w^{2(p-1)})(1-w^2)}+\frac{q_i}{1-w}
\right).
\end{array}
$$
This gives
$$p_i=((1+w^2+\dots+w^{2(p-1)})^i-1)w^{-3},$$
$$q_{i+1}=\frac{(1+w^2+\dots+w^{2(p-1)})^i-1}{(1-w^{2(p-1)})(1-w^2)}+\frac{q_i}{1-w}.$$
Solving the recursion, we obtain

\begin{proposition}\label{ppbh}
We have
$$
\begin{array}{l}k_{(p)}\wedge B(\Z/p)^i_+=k_{(p)}\vee
\mathscr{M}w^{-3}((1+w^2+\dots+w^{2(p-1)})^i-1)\vee\\[3ex]
\displaystyle
\widetilde{\mathscr{H}}\frac{1}{(1-w^{2(p-1)})(1-w^2)(1-w)^{i-1}}\cdot\\[3ex]
\displaystyle
\left(
\frac{1-(1+w^2+\dots+w^{2(p-1)})^i(1-w)^i}{1-(1+w^2+\dots+w^{2(p-1)})(1-w)}-\frac{1-(1-w)^i}{1-(1-w)}
\right).
\end{array}
$$
\end{proposition}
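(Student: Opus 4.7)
The plan is to argue by induction on $i$, in direct parallel with the proof of Proposition \ref{pbh}(\ref{pbhi3}). The base case $i=1$ is just equation \rref{epborel1}, which in the notation $k_{(p)}\wedge B(\Z/p)^i_+ \simeq k_{(p)}\vee \mathscr{M}\cdot p_i\vee \widetilde{\mathscr{H}}\cdot q_i$ gives $p_1 = w^{-1}(1+w^2+\cdots+w^{2(p-2)})$ and $q_1 = 0$. For the inductive step I smash the stage-$i$ decomposition over $k_{(p)}$ with $k_{(p)}\wedge B\Z/p_+$, using the two $k_{(p)}$-module splittings established immediately after Lemma \ref{lptori1}, namely $\mathscr{M}\wedge_{k_{(p)}}\mathscr{M}\simeq \mathscr{M}[3]\vee \widetilde{\mathscr{H}}\frac{w^4}{(1-w^{2(p-1)})^2}$ and $\mathscr{M}\wedge_{k_{(p)}}\widetilde{\mathscr{H}}\simeq \widetilde{\mathscr{H}}\frac{w^2(1+w)}{1-w^{2(p-1)}}$. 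Collecting the coefficients of $\mathscr{M}$ and of $\widetilde{\mathscr{H}}$ reproduces exactly the two recursions displayed just before the statement.

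Writing $A := 1+w^2+\cdots+w^{2(p-1)}$ and $C := ((1-w^{2(p-1)})(1-w^2))^{-1}$, the inhomogeneous term in the $p$-recursion equals $w^{-3}(A-1)$, which makes the ansatz $p_i = w^{-3}(A^i-1)$ obvious; a one-line check verifies both the base case and the recursion. Plugging this into the $q$-recursion produces the first-order inhomogeneous linear recurrence $q_{i+1} = C(A^i-1) + q_i/(1-w)$ with $q_1 = 0$. Iterating and reindexing gives
$$q_i = \frac{C}{(1-w)^{i-1}}\left(\sum_{m=0}^{i-1}(A(1-w))^m \;-\; \sum_{m=0}^{i-1}(1-w)^m\right),$$
and summing the two geometric series (with ratios $A(1-w)$ and $1-w$ respectively) yields precisely the closed form recorded in the statement.

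The only point requiring care, exactly as in Proposition \ref{ptor1}, is to confirm that the coefficient-level splittings of Lemma \ref{lptori1} genuinely lift to $k_{(p)}$-module splittings, and that the inductive wedge decomposition at stage $i+1$ is realized spectrally rather than only on homotopy groups. This follows by the same reasoning as in the $p=2$ case: any nonzero $k_{(p)}$-module map out of an iterated smash power of $\mathscr{M}$ into a shift of $\widetilde{\mathscr{H}}$ is detected on coefficients, since inspection of the defining free resolution of $\widetilde{\mathscr{H}}$ shows that its only nonzero self-maps as a $k_{(p)}$-module live in negative degrees, so the potential connecting maps in the cofiber sequences obtained from smashing with \rref{epborel1} necessarily vanish. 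The most tedious—but not the most delicate—step is the generating-function bookkeeping that converts the iterated sum into the displayed closed form; this is a routine but slightly lengthy manipulation that runs parallel to the $p=2$ computation in Proposition \ref{pbh}(\ref{pbhi3}).
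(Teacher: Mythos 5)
Your proposal is correct and follows essentially the same route as the paper: the paper also sets $k_{(p)}\wedge B(\Z/p)^i_+=k_{(p)}\vee\mathscr{M}p_i\vee\widetilde{\mathscr{H}}q_i$, smashes with one copy of $k_{(p)}\wedge B\Z/p_+$ from \rref{epborel1} using the two splittings $\mathscr{M}\wedge_{k_{(p)}}\mathscr{M}$ and $\mathscr{M}\wedge_{k_{(p)}}\widetilde{\mathscr{H}}$ recorded after Lemma \ref{lptori1}, and solves exactly the recursions you derive, obtaining the same closed form. Your added remark that the coefficient-level splittings are realized spectrally by the vanishing of the potential connecting maps (detected on coefficients, as in Proposition \ref{ptor1} and Lemma \ref{lhm}) is precisely the ``directly analogous to $p=2$'' justification the paper leaves implicit.
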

\qed

\vspace{3mm}
The situation in Borel cohomology is again analogous. We have
$$F(B\Z/p,k_{(p)})=\mathscr{P}(1+w^{-2}+\dots+w^{-2(p-2)})$$
where 
$$\mathscr{P}=xk_{(p)}[[x^{p-1}]]/((x\beta)^{p-1}+p).$$
Again, we have
$$\mathscr{P}\widehat{\wedge}_{k_{(p)}}\mathscr{P}=\mathscr{P}[-2]\vee \widetilde{\mathscr{H}}
\frac{w^{-4}}{(1-w^{2(p-1)})^2}$$
and
$$\mathscr{P}\widehat{\wedge}_{k_{(p)}}\widetilde{\mathscr{H}}=\widetilde{\mathscr{H}}
\frac{w^{-1}(1+w^{-1})}{1-w^{-2(p-1)}},
$$
Putting these together, we obtain the following result.

\begin{proposition}\label{ppbc}
We have
$$\protect
\begin{array}{l}F(B(\Z/p)^i,k_{(p)})=\protect
\mathscr{P}w^{-1}((1+w^{-2}+\dots+w^{-2(p-1)})^i-1)\vee\\[3ex]\protect
\displaystyle
\widetilde{\mathscr{H}}\frac{1}{(1-w^{-2(p-1)})(1-w^{-2})(1-w^{-1})^{i-1}}\cdot\\[3ex]
\protect\displaystyle\left(
\frac{1-(1+w^{-2}+\dots+w^{-2(p-1)})^i(1-w^{-1})^i}{1-(1+w^{-2}+\dots+w^{-2(p-1)})(1-w^{-1})}\protect
-\frac{1-(1-w^{-1})^i}{1-(1-w^{-1})}
\right).
\end{array}
$$
\end{proposition}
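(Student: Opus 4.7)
The plan is to mirror the inductive derivation immediately preceding Proposition~\ref{ppbh}, but in Borel cohomology. First, I would set up the inductive ansatz
$$F(B(\Z/p)^i_+, k_{(p)}) \sim k_{(p)} \vee \mathscr{P}\cdot\tilde{p}_i \vee \widetilde{\mathscr{H}}\cdot\tilde{q}_i,$$
with base case $\tilde{p}_1 = 1 + w^{-2} + \dots + w^{-2(p-2)}$ and $\tilde{q}_1 = 0$ read off from the displayed splitting $F(B\Z/p_+, k_{(p)}) \sim k_{(p)} \vee \mathscr{P}(1+w^{-2}+\dots+w^{-2(p-2)})$.

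For the induction step, I would use the cohomological K\"unneth-type identity
$$F(B(\Z/p)^{i+1}_+, k_{(p)}) \sim F(B(\Z/p)^i_+, k_{(p)}) \,\widehat{\wedge}_{k_{(p)}}\, F(B\Z/p_+, k_{(p)}),$$
understood in the completed smash product framework introduced before Proposition~\ref{pbc}. Expanding the right-hand side by bilinearity and substituting the two displayed identities
$$\mathscr{P} \widehat{\wedge}_{k_{(p)}} \mathscr{P} \sim \mathscr{P}[-2] \vee \widetilde{\mathscr{H}}\tfrac{w^{-4}}{(1-w^{-2(p-1)})^2}, \qquad \mathscr{P} \widehat{\wedge}_{k_{(p)}} \widetilde{\mathscr{H}} \sim \widetilde{\mathscr{H}}\tfrac{w^{-1}(1+w^{-1})}{1-w^{-2(p-1)}}$$
then yields a pair of recurrences for $\tilde{p}_i$ and $\tilde{q}_i$ that are exactly the $w\mapsto w^{-1}$ analogues of the Borel-homology recurrences displayed before Proposition~\ref{ppbh}; in particular $(1+w^{-2}+\dots+w^{-2(p-1)})$ appears as the multiplier on $\tilde{p}_i$ in its recursion, and the factor $\tfrac{1}{1-w^{-1}}$ as the multiplier on $\tilde{q}_i$ in its.

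The recurrence for $\tilde{p}_i$ is a linear recursion with constant-in-$i$ coefficient whose solution (after using $1+w^{-2}+\dots+w^{-2(p-2)} = (1-w^{-2(p-1)})/(1-w^{-2})$ to match conventions) produces the factor $w^{-1}((1+w^{-2}+\dots+w^{-2(p-1)})^i-1)$ in the statement. The recurrence for $\tilde{q}_i$ is then a first-order linear recursion in $\tilde{q}_i$ that can be telescoped using the geometric-sum identity employed in the proof of Proposition~\ref{pbh}; plugging in the closed form for $\tilde{p}_i$ and simplifying yields the double-fraction expression claimed for $\tilde{q}_i$.

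The main obstacle is the justification of the K\"unneth identity with $\widehat{\wedge}_{k_{(p)}}$: this depends on the homotopy-limit description of $\mathscr{P}$ and on the compatibility of the function spectrum construction with the relevant inverse limit, the same technical point needed in the $p=2$ analogue leading to Proposition~\ref{pbc}. Once it is in place, the remainder is formal power-series bookkeeping that is structurally identical to the derivation of Proposition~\ref{ppbh}, with $w$ replaced by $w^{-1}$ throughout.
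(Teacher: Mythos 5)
Your proposal is correct and follows essentially the same route as the paper, which derives Proposition~\ref{ppbc} exactly by the induction you describe: split off the $k_{(p)}$ summand, smash with one more copy of $F(B\Z/p_+,k_{(p)})$ using the two displayed $\widehat{\wedge}_{k_{(p)}}$ identities for $\mathscr{P}\widehat{\wedge}_{k_{(p)}}\mathscr{P}$ and $\mathscr{P}\widehat{\wedge}_{k_{(p)}}\widetilde{\mathscr{H}}$, and solve the resulting recurrences, which are the $w\mapsto w^{-1}$ mirror of those before Proposition~\ref{ppbh}. The technical point you flag about $\widehat{\wedge}_{k_{(p)}}$ and homotopy limits is handled in the paper exactly as in the $p=2$ case preceding Proposition~\ref{pbc}, so nothing further is needed.
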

\qed

\vspace{3mm}
To calculate Tate cohomology, we use again the cofibration sequence
$$\diagram
k_{(p)}\wedge B(\Z/p)^n_+\rto^N & F(B(\Z/p)^n_+,k_{(p)})\rto &\widehat{k}^{(\Z/p)^n},
\enddiagram
$$
together with the input from Proposition \ref{ppbh}, \ref{ppbc}. Again, for dimensional reasons, there
is no room for extensions involving the copies of $\widetilde{\mathscr{H}}$. Thus, again, it suffices
to compute the image $Q$ of $\widehat{k}^{(\Z/p)^n}$ in 
$$\widetilde{Q}=\widehat{K}^{(\Z/p)^n}=J^\wedge_p[\beta,\beta^{-1}][\frac{1}{p}]=
\bigoplus_{\substack{k\geq 1\\1\leq i_1<\dots<i_k\leq n\\1\leq\epsilon_i\leq p-1}}
\mathbb{Q}_p[\beta,\beta^{-1}]\{x_{i_1}^{\epsilon_1}\dots x_{i_k}^{\epsilon_k}\}
$$
where $J$ is, again, the augmentation ideal of $R(\Z/p)^n$.
Again, this is also equal to
\beg{epkktate+}{
\begin{array}{l}\displaystyle
K^*((B\Z/p)^n)/K_*\cdot\prod_{i=1}^n (p-(\beta x_i)^{p-1})=\\[1ex]
\displaystyle
\Z[\beta,\beta^{-1}][[x_1,\dots,x_n]]/ (x_i(p+(\beta x_i)^{p-1}),\prod_{i=1}^n(p+(\beta x_i)^{p-1})).
\end{array}
}
Again, $Q$ is a $k^*_{(p)}(B(\Z/p)^n)$-submodule of \rref{epkktate+}, while on a multiple of $x_i$,
$x_i^{p-1}=-\displaystyle\frac{p}{\beta^{p-1}}$. Thus, we have proved

\begin{lemma}\label{lpkktate1}
$Q$ is a $\Z_p[\beta,\displaystyle\frac{p}{\beta^{p-1}}]$-submodule of $\widetilde{Q}$.
\end{lemma}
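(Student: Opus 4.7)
The plan is to mimic the one-line proof of Lemma \ref{lkktate1}: since $Q$ is already known to be a $k^*_{(p)}(B(\Z/p)^n)$-submodule of $\widetilde{Q}$, I only need to verify closure under the scalar $p/\beta^{p-1}$, and I do this by identifying this scalar with a specific element of $k^*_{(p)}(B(\Z/p)^n)$ acting on an appropriate piece of $\widetilde{Q}$.

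The key identity comes from the presentation \ref{epkktate+}. The relation $x_i(p+(\beta x_i)^{p-1})=0$ in $\widetilde{Q}$, after multiplying by $z \in \widetilde{Q}$ and rearranging using that $\beta$ is invertible in $\widetilde{Q}$, yields the operator equality $x_i^{p-1} \equiv -p/\beta^{p-1}$ on the submodule $x_i\widetilde{Q}$ of multiples of $x_i$. That is, for every $y \in \widetilde{Q}$ of the form $y=x_i z$ we have $(p/\beta^{p-1})y=-x_i^{p-1}y$, trading the external scalar $p/\beta^{p-1}$ for the action of $-x_i^{p-1} \in k^*_{(p)}(B(\Z/p)^n)$.

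Given $y \in Q$, the fact that $Q \subset \widetilde{Q}$ lies in the augmentation-ideal completion $J^\wedge_p[\beta,\beta^{-1}][1/p]$ means that $y$ has a reduced monomial expansion in which every monomial is divisible by some $x_i$. Grouping monomials by the least-indexed variable in their support yields a decomposition $y=\sum_{i=1}^n x_i w_i$, and using both the per-$i$ relations and the product relation $\prod_i(p+(\beta x_i)^{p-1})=0$ from \ref{epkktate+} together with the $k^*_{(p)}(B(\Z/p)^n)$-module structure of $Q$, one arranges the $w_i$ so that each piece $x_i w_i$ lies in $Q$. Applying the key identity to each piece gives $(p/\beta^{p-1})y=-\sum_i x_i^{p-1}(x_i w_i)$, which lies in $Q$ since $Q$ is closed under multiplication by each $x_i^{p-1}$.

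The main obstacle is the last step: realizing the decomposition $y=\sum_i x_i w_i$ with all pieces $x_i w_i$ in $Q$, not merely in $\widetilde{Q}$. A priori the natural decomposition by support has pieces lying outside $Q$, and one needs both families of relations in \ref{epkktate+} together with induction on the monomial structure of $y$ to rearrange the decomposition so that it respects the $k^*_{(p)}(B(\Z/p)^n)$-submodule structure of $Q$. With that compatibility in hand, the lemma follows immediately.
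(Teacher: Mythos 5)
Your opening two steps are precisely the paper's own (very terse) proof of this lemma: $Q$ is a $k^*_{(p)}(B(\Z/p)^n)$-submodule of $\widetilde{Q}$, and on a multiple of $x_i$ the relation $x_i(p+(\beta x_i)^{p-1})=0$ trades the scalar $\frac{p}{\beta^{p-1}}$ for the operator $-x_i^{p-1}$. The problem is exactly the step you yourself flag as the main obstacle: you need every $y\in Q$ to decompose as $y=\sum_i x_iw_i$ with each piece $x_iw_i$ itself in $Q$, and for this you offer only ``one arranges the $w_i$'' by ``induction on the monomial structure''. No argument is given, and this is where the entire content of the lemma sits, since the identity $(p/\beta^{p-1})(x_iw_i)=-x_i^{p-1}(x_iw_i)$ tells you nothing unless the piece it is applied to is known to lie in $Q$.

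Moreover, the reduction in the form you propose appears to be unattainable, because it already fails for the unit. The comparison $\widehat{k}^{(\Z/p)^n}\to\widehat{K}^{(\Z/p)^n}$ is a map of ring spectra, so $1_{\widetilde{Q}}\in Q$; and $\prod_i(p+(\beta x_i)^{p-1})=0$ gives $1=-\sum_{\emptyset\neq S}(\beta^{p-1}/p)^{|S|}\prod_{i\in S}x_i^{p-1}$, whose pieces carry denominators $p^{|S|}$. In the model case $p=2$, $n=2$ (Lemma \ref{lkktate1}, which you are imitating), the paper's later structure results (Proposition \ref{pkktate1} and the extension $0\to P\to Q\to M\to 0$) show that in the degree of $1$ the module $Q$ is spanned over $\Z_2$ by $\beta x_1$, $\beta x_2$ and $1$; any element of $Q$ there supported only on the monomials $x_1,x_1x_2$ then has its $x_1$-coefficient in $\Z_2\beta$, whereas writing $1$ as (multiple of $x_1$) $+$ (multiple of $x_2$) forces the coefficient $-\beta/2$. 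So the pieces can never all be arranged to lie in $Q$: the scalar multiples you would need are exactly the ``divided'' classes whose membership in $Q$ is only established afterwards. A workable bridge is different from the one you propose: for instance, since multiplication by the scalar $\frac{p}{\beta^{p-1}}$ on $\widetilde{Q}$ coincides with multiplication by the ring element $\frac{p}{\beta^{p-1}}\cdot 1_{\widetilde{Q}}$, and $Q$ is the image of a ring map (hence closed under internal products), it suffices to produce that single element in $Q$; alternatively one verifies closure separately on the monomial generators of $P$ (where your trade-off does apply) and on explicit divided generators of $Q/P$, which is in effect what Propositions \ref{ppkktate1}--\ref{ppchv} accomplish. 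As written, your proposal leaves the decisive step unproved and rests it on a claim that is false in general.
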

\qed

Let again $P\subset Q$ be the image of $k^*_{(p)}B(\Z/p)^n$. Then we have an extension
of $\Z[\beta,\displaystyle\frac{p}{\beta^{p-1}}]$-modules
$$0\r P\r Q\r M\r0.$$
Analogously to Proposition \ref{pkktate1}, we then have

\begin{proposition}\label{ppkktate1}
The $\Z_p[\beta,\displaystyle\frac{p}{\beta^{p-1}}]$-module $P$ is free on generators
\beg{ettpgena}{\begin{array}{l}x_{i_1}^{\epsilon_1}\dots x_{i_k}^{\epsilon_k}x_{j_1}^{p-1}
\dots x_{j_{n-k}}^{p-1},\; 1\leq k< n,0\leq \epsilon_s\leq p-2 \;\text{for $1\leq s\leq k$},\\
i_1<\dots <i_k,\; j_1<\dots<j_{n-k}, \\
\{i_1,\dots,i_k,j_1,\dots,j_{n-k}\}=\{1,\dots,n\},
\end{array}}
\beg{ettpgenb}{(\prod_{i=1}^{n}(1+\frac{\beta^{p-1}}{p}x_i^{p-1})-1)
x_{i_1}^{i_1}\dots x_{i_k}^{\epsilon_k},\; 1\leq k\leq n, 0\leq \epsilon_s\leq p-2.
}
\end{proposition}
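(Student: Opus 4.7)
The plan is to adapt the three-part strategy of Proposition \ref{pkktate1}: (i) verify the elements \rref{ettpgena} and \rref{ettpgenb} lie in $P$; (ii) show they span $P$ as a $\Z_p[\beta, p/\beta^{p-1}]$-module; (iii) establish $\Z_p[\beta, p/\beta^{p-1}]$-linear independence by passing to the ambient $\Q_p[\beta,\beta^{-1}]$-basis of $\widetilde Q$.

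For membership, the elements \rref{ettpgena} are polynomials in the Euler classes $x_i$ and so lie in $P$ by definition. For \rref{ettpgenb}, the relation $\prod_{i=1}^n(p+(\beta x_i)^{p-1})=0$ in $\widetilde Q$ coming from \rref{epkktate+}, divided through by $p^n$ in the $p$-inverted ring, yields $\prod_{i=1}^n(1+\tfrac{\beta^{p-1}}{p}x_i^{p-1})=0$, so each element of \rref{ettpgenb} reduces in $\widetilde Q$ to $-x_{i_1}^{\epsilon_1}\cdots x_{i_k}^{\epsilon_k}$, which is manifestly in $P$.

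For spanning, every element of $P$ is a $\Z_{(p)}[\beta]$-linear combination of monomials $\prod_l x_l^{a_l}$. Using $x_l^p=-(p/\beta^{p-1})x_l$ from the relation $x_l(p+\beta^{p-1}x_l^{p-1})=0$, any exponent $\ge p$ can be reduced at the cost of a $p/\beta^{p-1}$-factor, so we may assume $0\le a_l\le p-1$. Monomials with at least one exponent equal to $p-1$ and at least one in $\{0,\dots,p-2\}$ are precisely \rref{ettpgena}; monomials with all exponents $\le p-2$ are, up to sign, values of \rref{ettpgenb}; and the sole remaining monomial $x_1^{p-1}\cdots x_n^{p-1}$ is produced from the full product relation $\prod_{i=1}^n(p+\beta^{p-1}x_i^{p-1})=0$, which expresses $\prod x_l^{p-1}$ as a $\Z_p[\beta, p/\beta^{p-1}]$-combination of partial products $\prod_{i\in S}x_i^{p-1}$ for $\emptyset\ne S\subsetneq[n]$ (all in \rref{ettpgena}) plus a $(p/\beta^{p-1})^n$-multiple of the $k=n$, all-zero-$\epsilon$ case of \rref{ettpgenb}.

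Linear independence then follows because, after the reductions above, each generator is, up to sign, a distinct element of the $\Q_p[\beta,\beta^{-1}]$-basis $\{\prod x_{i_s}^{\epsilon_s}: k\ge 1,\,1\le\epsilon_s\le p-1\}$ of $\widetilde Q$ (or differs from one by scaling by a unit); any nontrivial $\Z_p[\beta, p/\beta^{p-1}]$-relation would then, after clearing $p$-denominators, yield a nontrivial $\Q_p[\beta,\beta^{-1}]$-relation on distinct basis monomials, a contradiction. The main obstacle is the combinatorial bookkeeping in step (ii): verifying that the parametrizations of \rref{ettpgena} and \rref{ettpgenb} jointly match, under the reductions from step (i), the $p^n-1$ basis monomials of $\widetilde Q$, with careful handling of the top monomial $\prod x_l^{p-1}$ via the global relation and of the $\epsilon_s=0$ cases in \rref{ettpgenb}.
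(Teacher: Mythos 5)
Your argument is correct and is essentially the paper's own: the paper proves this proposition only by declaring it analogous to Proposition \ref{pkktate1}, and your three steps (membership via the relation $\prod_{i=1}^n(p+(\beta x_i)^{p-1})=0$, spanning via $x_l^p=-\frac{p}{\beta^{p-1}}x_l$ together with the global relation to eliminate the top monomial $x_1^{p-1}\cdots x_n^{p-1}$, and independence by comparison with the $\Q_p[\beta,\beta^{-1}]$-basis of $\widetilde{Q}$) are exactly that analogy carried out. One phrase to tighten: the all-$\epsilon_s=0$ generator of \rref{ettpgenb} equals $-1$, which is not a unit multiple of a single basis monomial, so in the independence step you should observe (as the paper implicitly does for $p=2$ via $x_1\cdots x_n$) that this generator is the only one whose expansion involves $x_1^{p-1}\cdots x_n^{p-1}$, so the base change to the basis monomials is triangular with invertible diagonal.
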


\qed

\vspace{3mm}
\vspace{3mm}
Now we have proved that
\beg{epkktate3}{M=\bigoplus_{\substack{1\leq k\leq n\\ 1\leq i_1<\dots<i_k\leq n\\0\leq \epsilon_s\leq p-2}}
\Z[\frac{\beta^{p-1}}{p}]/\Z[\beta]\{y_{i_1\dots i_k}x_{i_1}^{\epsilon_1}\dots x_{i_k}^{\epsilon_k}\}
}
where the generator $y_{i_1\dots i_k}$ is in $Tor$-degree $k$ and dimension $(2k-2)(p-1)$.
Analogously to Proposition \ref{pchoice1}, one has

\vspace{3mm}

\begin{proposition}\label{ppchoice1}
The elements $y_{i_1\dots i_k}$ can be chosen in such a way that in $Q$, we have
\beg{epkktate4}{
y_{i_1\dots i_k}=\frac{\beta^{k(p-1)}}{2}
\left(\prod_{s=1}^{k}(1+\frac{\beta^{p-1}}{p}x_{i_s}^{p-1})-1
\right)\prod_{u=1}^{n-k}(p+\beta x_{j_u}^{p-1})
}
where $\{i_1,\dots,i_k,j_1,\dots,j_{n-k}\}=\{1,\dots,n\}.$
\end{proposition}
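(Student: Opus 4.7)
The plan is to induct on $n$, following the proof of Proposition \ref{pchoice1} essentially verbatim, with the $p$-typical formal group law $[p]x = px + \beta^{p-1}x^p$ in place of $[2]x = 2x + \beta x^2$. Three ingredients are needed: the base case $n=1$, a transfer/corestriction step for $k < n$, and a rigidity argument for $k = n$.

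For the base case $n = 1$, the generator $y_1$ is read off directly from Greenlees--May \cite{gm}: the $p$-series relation $x_1(p + \beta^{p-1}x_1^{p-1}) = 0$ in $\widetilde{Q}$ together with the explicit description of the $\Z/p$ Tate cohomology identifies $y_1$ with
\[
\frac{\beta^{p-1}}{p}\Bigl(\bigl(1 + \frac{\beta^{p-1}}{p}x_1^{p-1}\bigr) - 1\Bigr),
\]
which is exactly \rref{epkktate4} at $n = k = 1$.

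For the inductive step with $1 \leq k < n$, choose $y_{i_1\dots i_k}$ as the image under the transfer from the subgroup $G' = (\Z/p)^k \hookrightarrow (\Z/p)^n$ corresponding to the coordinates $i_1 < \dots < i_k$ of the analogous generator in $\widehat{k}^{G'}$ supplied by the induction hypothesis. Under the embeddings of the Tate cohomology modules into the ambient rings of the form \rref{epkktate+}, the transfer acts as multiplication by the Euler class of the complementary representation, which in our coordinates is precisely $\prod_{u=1}^{n-k}(p + \beta^{p-1}x_{j_u}^{p-1})$, where $\{j_1, \dots, j_{n-k}\}$ is the complement of $\{i_1, \dots, i_k\}$. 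Multiplying the inductive formula by this factor yields \rref{epkktate4}.

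For the top case $k = n$, the situation is rigid: $y_{1\dots n}$ lives in a single internal degree, and since $Q \subset \widetilde{Q}$ is free of $p$- and $\beta$-torsion, the generator is determined up to a unit by its dimension. One may choose $y_{1\dots n} = -\beta^{n(p-1)}/p$ in exact analogy with the $p = 2$ case, and then verify that the right-hand side of \rref{epkktate4} at $k = n$ (empty product equal to $1$) agrees up to a unit that can be absorbed into $y_{1\dots n}$, by expanding the product and using the relations $x_i(p + \beta^{p-1}x_i^{p-1}) = 0$ and $\prod_i (p + \beta^{p-1}x_i^{p-1}) = 0$ of \rref{epkktate+}. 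The main obstacle will be the corestriction identification in the case $k < n$: one must show that, in our coordinate description of $Q$ inside $\widetilde{Q}$, the transfer is implemented exactly by the Euler-class product $\prod_u (p + \beta^{p-1}x_{j_u}^{p-1})$, which requires carefully tracking the $p$-typical Euler classes through the identification of Borel cohomology with the completed ring in \rref{epkktate+}. Once this is established, the rest is a mechanical verification matching the $p = 2$ argument step by step.
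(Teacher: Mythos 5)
Your proof is correct and is essentially the paper's own argument: the paper disposes of this proposition by declaring it analogous to Proposition \ref{pchoice1}, and your induction on $n$ --- base case $n=1$ read off from Greenlees--May, corestriction from the subgroup $(\Z/p)^k$ realized as multiplication by the complementary Euler-class product for $k<n$, and the degree plus $p$- and $\beta$-torsion-freeness rigidity argument for $k=n$ --- is precisely that analogue. Your writing the Euler factors as $p+\beta^{p-1}x_{j_u}^{p-1}$ (and the denominator as $p$) silently corrects evident typos in the printed statement and is the reading consistent with the relations in \rref{epkktate+}.
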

\qed

\vspace{3mm}
Now put
$$t_i=x_i^{p-1}+\frac{p}{\beta^{p-1}}.$$
The analogue of Lemma \ref{lchv} is

\begin{lemma}\label{lpchv}
The $\Z_p[\beta,\displaystyle\frac{p}{\beta^{p-1}}]$-module $P$ is free on the generators
\beg{epkktate6}{
\begin{array}{l}
\displaystyle
z_{i_1\dots i_k}x_{i_1}^{\epsilon_1}\dots x_{i_k}^{\epsilon_k}=
(t_{j_1}\dots t_{j_{n-k}}-\left(\frac{p}{\beta^{p-1}}\right)^k)x_{i_1}^{\epsilon_1}\dots x_{i_k}^{\epsilon_k},\\
1\leq k<n, \; 1\leq i_1<\dots<i_k\leq n,0\leq \epsilon_s\leq p-2\;\text{for $1\leq s\leq k$}\\
j_1<\dots<j_{n-k},\; \{i_1,\dots,i_k,j_1,\dots,j_{n-k}\}=\{1,\dots,n\}\\[2ex]
\displaystyle
z_{1\dots n}x_{1}^{\epsilon_1}\dots x_{n}^{\epsilon_n}=(\left(\frac{\beta^{p-1}}{p}\right)^nt_1\dots t_n -1),
0\leq \epsilon_s\leq p-2
\end{array}
}
\end{lemma}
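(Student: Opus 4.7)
The plan is to follow the template of Lemma \ref{lchv} verbatim: exhibit the base change matrix between the generators \rref{epkktate6} and those of Proposition \ref{ppkktate1}, and verify that it is triangular with invertible entries on the diagonal, hence invertible over $\Z_p[\beta,\frac{p}{\beta^{p-1}}]$.

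First I would expand $z_{i_1\dots i_k}x_{i_1}^{\epsilon_1}\dots x_{i_k}^{\epsilon_k}$ for $1\le k<n$ by substituting $t_{j_u}=x_{j_u}^{p-1}+p/\beta^{p-1}$ into the product $t_{j_1}\dots t_{j_{n-k}}$ and distributing. The ``top'' term, obtained by taking $x_{j_u}^{p-1}$ from every factor, is the matching \rref{ettpgena}-generator indexed by $(i_1,\dots,i_k)$, $(j_1,\dots,j_{n-k})$, $(\epsilon_1,\dots,\epsilon_k)$, with coefficient $1$. Every other term in the expansion corresponds to a proper subset $S\subsetneq\{j_1,\dots,j_{n-k}\}$, and produces a $\Z_p[\beta,\frac{p}{\beta^{p-1}}]$-multiple of a \rref{ettpgena}-generator indexed by $\{i_1,\dots,i_k\}\cup S$; the subtracted scalar built into the definition of $z_{i_1\dots i_k}$ absorbs the $S=\emptyset$ contribution. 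For $k=n$, a direct expansion of $(\beta^{p-1}/p)^n\,t_1\dots t_n-1$ yields $\prod_i(1+\tfrac{\beta^{p-1}}{p}x_i^{p-1})-1$, so $z_{1\dots n}x_1^{\epsilon_1}\dots x_n^{\epsilon_n}$ is exactly a \rref{ettpgenb}-generator.

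Next, I would order the Proposition \ref{ppkktate1} basis by increasing number of $(p-1)$-power factors (with the \rref{ettpgenb}-family placed at the top). In this ordering, each $z$-generator equals the matching Proposition \ref{ppkktate1} generator plus strictly higher basis elements, each carrying a $\Z_p[\beta,\frac{p}{\beta^{p-1}}]$-coefficient. The base-change matrix is therefore upper triangular with $1$'s on the diagonal, and hence invertible, which gives the claim.

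The main obstacle I expect is purely bookkeeping: verifying that each error term produced by a proper subset $S$ lies directly in the Proposition \ref{ppkktate1} basis rather than in some form that must be reduced further. This is guaranteed by the disjointness of $\{i_\ell\}$ and $\{j_u\}$, so that the $(p-1)$-power factors and the $\epsilon$-power factors act on different variables, combined with the constraint $\epsilon_s\le p-2$, which prevents any reduction via the relation $x_i^p=-(p/\beta^{p-1})x_i$. Matching the subtracted constant against the appropriate low-order term of the expansion also requires some care, but mirrors the corresponding step in the proof of Lemma \ref{lchv} directly.
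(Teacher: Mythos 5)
Your argument is correct and is essentially the paper's own: Lemma \ref{lpchv} is stated as the direct analogue of Lemma \ref{lchv}, whose proof is precisely the observation that the base change matrix to the generators of Proposition \ref{ppkktate1} is triangular with invertible entries on the diagonal, and your expansion of $t_{j_1}\dots t_{j_{n-k}}$ over subsets $S$ simply makes that triangularity explicit (reading the subtracted constant as $\left(p/\beta^{p-1}\right)^{n-k}$, as clearly intended, so that the $S=\emptyset$ term cancels). Nothing further is needed.
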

\qed

\vspace{3mm}
Now analogously to Proposition \ref{pchv}, we have

\begin{proposition}\label{ppchv}
The generators $y_{i_1\dots i_k}$ of Proposition \ref{pchoice1} can be changed to generators
$y_{i_1\dots i_k}^\prime$ such that in $Q$, we have
$$y_{i_1\dots i_k}^\prime =\frac{\beta^{n(p-1)}}{p}\cdot z_{i_1\dots i_k},\; 1\leq k\leq n,\; 1\leq
i_1<\dots<i_k\leq n.$$
\end{proposition}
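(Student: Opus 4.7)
The plan is to follow the strategy of Proposition \ref{pchv} with only the obvious modifications for general $p$, using the change of variables $t_i = x_i^{p-1} + p/\beta^{p-1}$ from Lemma \ref{lpchv} in place of $t_i = x_i + 2/\beta$. The first step is to apply this substitution directly to formula \rref{epkktate4}. The two types of factors appearing there both simplify attractively: $1 + (\beta^{p-1}/p)\,x_i^{p-1}$ becomes $(\beta^{p-1}/p)\,t_i$, and $p + \beta^{p-1}\,x_i^{p-1}$ becomes $\beta^{p-1}\,t_i$. Collecting the $\beta^{k(p-1)}$ prefactor with the $\beta^{(n-k)(p-1)}$ coming from the $n-k$ norm factors gives $\beta^{n(p-1)}$, so that
$$y_{i_1\dots i_k} = \frac{\beta^{n(p-1)}}{p}\left[\left(\frac{\beta^{p-1}}{p}\right)^k t_1 t_2 \cdots t_n \;-\; \prod_{u=1}^{n-k} t_{j_u}\right].$$

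For the top case $k=n$, the second product is empty, so $y_{1\dots n} = \frac{\beta^{n(p-1)}}{p}\,z_{1\dots n}$ already, and we simply set $y'_{1\dots n} = y_{1\dots n}$. For $1 \le k < n$, I would form the element $(p/\beta^{p-1})^{n-k} y_{1\dots n}$, which expands to $p^{n-k-1}\beta^{k(p-1)}\bigl((\beta^{p-1}/p)^n t_1\cdots t_n - 1\bigr)$, and subtract $y_{i_1\dots i_k}$ from it. The $t_1\cdots t_n$ contributions cancel, leaving
$$\frac{\beta^{n(p-1)}}{p}\prod_{u=1}^{n-k} t_{j_u} \;-\; p^{n-k-1}\beta^{k(p-1)} \;=\; \frac{\beta^{n(p-1)}}{p}\left(\prod_{u=1}^{n-k} t_{j_u} \;-\; \left(\frac{p}{\beta^{p-1}}\right)^{n-k}\right),$$
which is $\frac{\beta^{n(p-1)}}{p} z_{j_1\dots j_{n-k}}$. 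Using the bijection $\{i_1<\dots<i_k\} \leftrightarrow \{j_1<\dots<j_{n-k}\}$ by complementation (exactly as in Proposition \ref{pchv}), after relabeling this is the desired $y'_{i_1\dots i_k}$.

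The only non-routine step is confirming that this manipulation legitimately produces a new set of generators of the quotient $M = Q/P$, i.e.\ that $(p/\beta^{p-1})^{n-k} y_{1\dots n}$ together with the constant correction $p^{n-k-1}\beta^{k(p-1)}$ represent classes in $P$ so that only the class of $-y_{i_1\dots i_k}$ survives. This uses the relation \rref{epkktate+}, namely $\prod_{i=1}^n(p + (\beta x_i)^{p-1}) = 0$, which lets any scalar of the form $p^a \beta^b$ be rewritten as an explicit $\mathbb{Z}_p[\beta, p/\beta^{p-1}]$-combination of the generators of $P$ listed in Proposition \ref{ppkktate1}. Once this is granted, the argument is purely algebraic and proceeds index-by-index as above.
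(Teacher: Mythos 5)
Your argument is correct and is essentially the paper's own proof: the odd-primary proposition is stated as the direct analogue of Proposition \ref{pchv}, whose proof uses exactly your steps --- rewrite $y_{i_1\dots i_k}$ via the substitution $t_i$ (so that $1+\frac{\beta^{p-1}}{p}x_i^{p-1}$ and $p+(\beta x_i)^{p-1}$ become $\frac{\beta^{p-1}}{p}t_i$ and $\beta^{p-1}t_i$), take $y'_{1\dots n}=y_{1\dots n}$, and for $k<n$ subtract $y_{i_1\dots i_k}$ from $(p/\beta^{p-1})^{n-k}y_{1\dots n}$, which lies in $P$ since it is $p^{n-k-1}\beta^{k(p-1)}z_{1\dots n}$. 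The only cosmetic difference is bookkeeping: in Lemma \ref{lpchv} the paper indexes $z_{i_1\dots i_k}$ by the complement of the set of $t$'s occurring in it (modulo an evident typo, $(p/\beta^{p-1})^{k}$ for $(p/\beta^{p-1})^{n-k}$), so under that convention your final element is literally $\frac{\beta^{n(p-1)}}{p}z_{i_1\dots i_k}$, while your complementation relabeling reproduces the convention used in the $p=2$ proof.
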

\qed

\vspace{3mm}
Let $Q_n$ be the $\Z_p[\beta,\displaystyle\frac{p}{\beta^{p-1}}]$-submodule of $\Q_p[\beta,\beta^{-1}]$
generated by $1,\beta^{(p-1)(n-1)}(\displaystyle \frac{\beta^{p-1}}{p})^i$, $i\in \N$. Then, as remarked,
we have a unique $k$-module $\mathscr{Q}_n$ with coefficients $Q_n$. We have proved

\vspace{3mm}

\begin{theorem}\label{tptt}
For $n\geq 0$, the Tate cohomology $\widehat{k}^{(\Z/p)^n}$ is isomorphic, as a $k$-module, to
$$
\mathscr{Q}_n\cdot ((1+w^{-2}+\dots+w^{-2(p-1)})^n-w^{-2n(p-1)})\vee \widetilde{\mathscr{H}}\cdot f(w)
$$
where 
$$\begin{array}{l}
\displaystyle 
f(w)=
\frac{w}{(1-w^{2(p-1)})(1-w^2)(1-w)^{n-1}}\cdot\\[3ex]
\displaystyle
\left(
\frac{1-(1+w^2+\dots+w^{2(p-1)})^n(1-w)^n}{1-(1+w^2+\dots +w^{2(p-1)})(1-w)}-\frac{1-(1-w)^n}{1-(1-w)}
\right)+
\\[3ex]
\displaystyle
\frac{1}{(1-w^{-2(p-1)})(1-w^{-2})(1-w^{-1})^{n-1}}\cdot\\[3ex]
\displaystyle
\left(
\frac{1-(1+w^{-2}+\dots+w^{-2(p-1)})^n(1-w^{-1})^n}{1-(1+w^{-2}+
\dots + w^{-2(p-1)})(1-w^{-1})}-\frac{1-(1-w^{-1})^n}{1-(1-w^{-1})}
\right).

\end{array}$$
\end{theorem}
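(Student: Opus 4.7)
The plan is to follow the proof of Theorem \ref{ttt} line by line, replacing each ingredient by its $p$-typical analogue developed earlier in this section. We start from the Greenlees--May cofibration
$$k_{(p)} \wedge B(\Z/p)^n_+ \xrightarrow{N} F(B(\Z/p)^n_+, k_{(p)}) \to \widehat{k}^{(\Z/p)^n},$$
and insert the formulas of Propositions \ref{ppbh} and \ref{ppbc}. The first step is to show that, as in the $p=2$ case, the $\widetilde{\mathscr{H}}$-summands of both the source and the target of $N$ split off $\widehat{k}^{(\Z/p)^n}$ as $k_{(p)}$-module wedge summands. This reduces to checking that there are no nontrivial extensions of $\widetilde{\mathscr{H}}$ with $\mathscr{P}$, $\mathscr{M}$, or $\widetilde{\mathscr{H}}$ in non-negative degrees, which follows from the $p$-typical analogue of Lemma \ref{lhm}: since $\widetilde{\mathscr{H}} = k_{(p)}/(p,\beta^{p-1})$ is defined by killing a regular sequence, the only nonzero self-maps of $\widetilde{\mathscr{H}}$ as a $k_{(p)}$-module live in negative degrees. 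Summing the $\widetilde{\mathscr{H}}$-parts from Propositions \ref{ppbh} and \ref{ppbc}, with the source contribution shifted up by $1$ from the cofiber, then yields the $\widetilde{\mathscr{H}} \cdot f(w)$ summand of the theorem.

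It remains to identify the non-$\widetilde{\mathscr{H}}$ part. Since $Q_n$ admits a degree $0,1$ free $k_{(p)*}$-resolution, it is realized uniquely by $\mathscr{Q}_n$, so it suffices to identify the coefficients. These form the $\Z_p[\beta, p/\beta^{p-1}]$-module $Q$ of Lemma \ref{lpkktate1}, fitting in the short exact sequence $0 \to P \to Q \to M \to 0$ with $P$ free on the generators of Proposition \ref{ppkktate1} and $M$ as in (\ref{epkktate3}). The extension is pinned down by Proposition \ref{ppchoice1}, which writes each $y_{i_1\dots i_k}$ explicitly as an element of $\widetilde{Q}$. The variable change $t_i = x_i^{p-1} + p/\beta^{p-1}$ of Lemma \ref{lpchv}, followed by Proposition \ref{ppchv}, yields the uniform relation $y'_{i_1\dots i_k} = (\beta^{n(p-1)}/p)\cdot z_{i_1\dots i_k}$. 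From this one reads off that $Q$ is a direct sum of copies of $Q_n$, one for each monomial $x_{i_1}^{\epsilon_1}\dots x_{i_k}^{\epsilon_k}$ with $1\leq k \leq n$ and $0\leq \epsilon_s\leq p-1$, excluding only the top monomial $x_1^{p-1}\dots x_n^{p-1}$ (absorbed into the base via (\ref{epkktate+})). Since $|x_i|=-2$, tracking the dimension shifts gives exactly the Poincar\'e series $(1+w^{-2}+\dots+w^{-2(p-1)})^n - w^{-2n(p-1)}$.

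The main obstacle will be the rigorous verification of the $\widetilde{\mathscr{H}}$-splitting at the spectrum level, rather than merely on coefficients. The relevant argument is the $p$-typical analogue of the diagram (\ref{eltor9}) chase used for Proposition \ref{ptor1}: any potential nontrivial extension is controlled by a $k_{(p)}$-module map $\mathscr{M} \to \widetilde{\mathscr{H}}[*]$, which by the defining resolution of $\mathscr{M}$ must be detected on coefficients and hence vanishes. Beyond this, the passage from $p=2$ to general $p$ amounts to substituting $2 \mapsto p$ and $\beta \mapsto \beta^{p-1}$ at the appropriate places, and the final assembly of $\mathscr{Q}_n \cdot ((1+w^{-2}+\dots+w^{-2(p-1)})^n - w^{-2n(p-1)}) \vee \widetilde{\mathscr{H}} \cdot f(w)$ is then formal.
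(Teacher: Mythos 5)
Your proposal follows essentially the same route as the paper: the Greenlees--May cofibration sequence with the inputs of Propositions \ref{ppbh} and \ref{ppbc}, the degree-based splitting of the $\widetilde{\mathscr{H}}$-summands, and the identification of the torsion-free coefficients $Q$ via the extension $0\to P\to Q\to M\to 0$ together with Propositions \ref{ppkktate1}, \ref{ppchoice1}, Lemma \ref{lpchv} and Proposition \ref{ppchv}, exactly as the paper assembles them. The only cosmetic point is the indexing of the $\mathscr{Q}_n$-summands (generators correspond to monomials $x_1^{a_1}\dots x_n^{a_n}$ with $0\leq a_i\leq p-1$, omitting the top monomial, which is how the count $(1+w^{-2}+\dots+w^{-2(p-1)})^n-w^{-2n(p-1)}$ arises), which your phrasing captures correctly in substance.
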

\qed

\vspace{3mm}
As spectra, analogously to Proposition \ref{ppost}, one sees that $\widetilde{\mathscr{H}}$ is
a wedge of suspensions of $H\Z/p$, while, denoting by $\ell$ the Adams summand of $k^\wedge_p$, 
$\mathscr{Q}_n$ is
$$
\bigvee_{i\geq 1} H\Z_p[-2i]\wedge \tau_{\leq 2(p-1)(n-1)}k^\wedge_p\vee\bigvee_{i\geq 0} H\Z_p[2(p-1)n+2i].
$$
The spectrum $\tau_{\leq 2(p-1)(n-1)}k^\wedge_p$, of course, further decomposes as a wedge of suspensions
of $\tau_{\leq 2(p-1)(n-1)}\ell$ where $\ell$ is the Adams summand of $k^\wedge_p$.

\vspace{3mm}
\section{Relation with the calculation of Bruner and Greenlees}\label{sbg}

In this Section, we prove that our computation of Borel homology, Borel cohomology and Tate cohomology for $p=2$
agrees with the result of Bruner and Greenlees \cite{bg}. In fact, the only non-trivial
part left concerns the coefficient of the $H$ summand in Proposition \ref{pbh}, part \ref{pbhi3} for Borel 
homology, Proposition \ref{pbc} for Borel cohomology, and Theorem \ref{ttt} for Tate cohomology. In fact,
it suffices to compute the case of Borel homology, since both \cite{bg} and our paper have a duality which
determines the other two cases in the same way. 

Bruner and Greenlees \cite{bg} define
\begin{equation}
[T_i] = \frac{(-t)^{4-i}[(1-x)^r_{[i]}-x^{1-i}(1-x^2)^r_{[i]}]}{(1-x)^{r+1}}, 
\label{theirs}
\end{equation}
$x=t^2$.
By \cite{bg}, Lemma 4.8.3, the Poincare series of the coefficients of the $H$-summand of $k\wedge B(\Z/2)^r_+$,
in the variable $t$, is
\begin{equation}
[T_2] + [T_3] + \cdots + [T_r] . 
\label{bgmain}
\end{equation}

We will separately compute 
\begin{equation}
\sum_{i=2}^{r} (-t)^{4-i}(1-x)^r_{[i]}
\label{part1}
\end{equation}
and
\begin{equation}
\sum_{i=2}^{r} (-t)^{4-i}x^{1-i}(1-x^2)^r_{[i]}.
\label{part2}
\end{equation}
\vspace{2mm}

For formula (\ref{part1}), we have $r \choose i$ occurring in $[T_2], \ldots, [T_i]$. For $2\leq j \leq i$, the term in $[T_j]$ is
\[ (-t)^{4-j} {r \choose i} (-x)^{i} =(-1)^j t^{4-j} {r \choose i}(-1)^i t^{2i} = {r \choose i} (-1)^{i+j} t^{2i+4-j} . \]

Thus, letting $j$ run from $2$ to $i$, the coefficient of $r \choose i$ in (\ref{part1}) is 
\[
 t^{i+4} - t^{i+5} + \cdots +(-1)^{i-2} t^{2i+2}  = t^{i+4} (1-t+t^2 - \cdots +(-1)^{i-2} t^{i-2}) 
=  t^{i+4} \cdot \frac{1-(-t)^{i-1}}{1+t} . \]

Now sum over $i = 2, \ldots, r$, we get (\ref{part1}) is 
\begin{equation*}
\begin{split}
\frac{1}{1+t} \sum_{i=2}^{r} {r \choose i} t^{i+4}(1-(-t)^{i-1}) & = 
\frac{1}{1+t} \left[ \sum_{i=2}^r {r \choose i} t^{i+4} - \sum_{i=2}^r {r \choose i } (-1)^{i-1}t^{2i+3} \right] \\
& = \frac{1}{1+t} \left[ t^4 \sum_{i=2}^r {r \choose i} t^i + t^3\sum_{i=2}^r {r \choose i}(-1)^{i} t^{2i} \right] \\
& = \frac{1}{1+t} \left[ t^4[(1+t)^r-rt -1] +t^3[(1-t^2)^r -r(-t^2) -1] \right] \\
& = \frac{1}{1+t} \left[ t^4(1+t)^r -rt^5 -t^4 +t^3(1-t^2)^r +rt^5 -t^3 \right] \\
& =  \frac{1}{1+t} \left[ t^4(1+t)^r-t^4 +t^3(1-t^2)^r -t^3 \right].
\end{split}
\end{equation*}

Hence, we get that formula (\ref{part1}) is 
\[ \frac{t^3}{1+t} \left[ t((1+t)^r -1) + ((1-t^2)^r -1)\right] . \]
\vspace{5mm}

Now for formula (\ref{part2}), again, $r \choose i$ occurs in $[T_2], \ldots, [T_i]$. For $2 \leq j \leq i$, the term containing 
$r \choose i$ in $[T_j]$ is 

\[ (-1)^{4-j}x^{1-j}{r \choose i} (-x^2)^i = (-1)^jt^{4-j}t^{2-2j} {r \choose i} (-1)^i t^{4i} 
= {r \choose i} (-1)^{i+j}t^{4i-3j+6} . \]
Summing over $2 \leq j \leq i$, we get the coefficient of $r \choose i$ in (\ref{part2}) is 
\[ (-1)^i t^{4i} + (-1)^{i-1} t^{4i-3} + \cdots + t^{i+6} . \]
(The lowest power of $t$ is always positive.) This is in turn 
\[ t^{i+6}(1-t^3 + t^{6} + \cdots + (-1)^i t^{3i-6}) = t^{i+6} \cdot \frac{1-(-t^3)^{i-1}}{1+t^3} . \]

Hence, (\ref{part2}) is 
\begin{equation*}
\begin{split}
& \frac{1}{1+t^3}\sum_{i=2}^r {r \choose i} t^{i+6} \left[ 1- (-t^3)^{i-1} \right] \\
& =
\frac{1}{1+t^3} \left[ \sum_{i=2}^r {r \choose i}t^{i+6} -
\sum_{i=2}^r {r \choose i} t^{i+6} (-t^3)^{i-1}  \right] \\
& = \frac{1}{1+t^3} \left[ t^6 \sum_{i=2}^r {r \choose i}t^i + \sum_{i=2}^r {r \choose i} (-1)^i t^{4i+3} \right] \\
& = \frac{1}{1+t^3} \left[ t^6 \sum_{i=2}^r {r \choose i} t^i +t^3 \sum_{i=2}^r {r \choose i} (-t^4)^i \right] \\
& = \frac{1}{1+t^3} \left[ t^6 [(1+t)^r -rt -1] + t^3[(1-t^4)^r -r(-t^4) -1] \right] \\
& = \frac{t^3}{1+t^3} \left[ t^3((1+t)^r-1) +((1-t^4)^r -1) \right] . 
\end{split}
\end{equation*}

Putting it together, we get that (\ref{bgmain}) is 
\begin{equation}
 \frac{1}{(1-t^2)^{r+1}} \begin{bmatrix}
 \frac{t^3}{1+t}[t((1+t)^r-1)+((1-t^2)^r-1)] \\
 -\frac{t^3}{1+t^3}[t^3((1+t)^r-1)+((1-t^4)^r-1)] \end{bmatrix} . 
\label{answer}
\end{equation}
\vspace{2mm}

It seems better to set $u = -t$, so we get that (\ref{answer}) is 
\begin{equation}
\frac{1}{(1-u^2)^{r+1}} \begin{bmatrix} 
\frac{u^4}{1-u}((1-u)^r -1)  - \frac{u^3}{1-u}((1-u^2)^r-1) \\
-\frac{u^6}{1-u^3}((1-u)^r-1) +\frac{u^3}{1-u^3}((1-u^4)^r -1) \end{bmatrix}
\label{withneg}
\end{equation}

In (\ref{withneg}), the two terms of the first column in the bracket add up to 
\begin{equation}
 \left( \frac{u^4}{1-u} - \frac{u^6}{1-u^3} \right)[(1-u)^r-1] = 
\frac{u^4(1+u)}{1-u^3} [(1-u)^r -1]. 
\label{column1}
\end{equation}

The two terms of the second column inside the bracket of (\ref{withneg}) add up to 
\begin{equation*}
\begin{split}
& \frac{u^3}{1-u^3}[(1-u^4)^r-1] - \frac{u^3}{1-u} [(1-u^2)^r-1]\\
& = \frac{u^3}{1-u^3}[(1-u^2)^r(1+u^2)^r -1] - \frac{u^3(1+u +u^2)}{1-u^3}[(1-u^2)^r-1] \\
& = \frac{u^3}{1-u^3}[(1+u^2)^r-1](1-u^2)^r - \frac{u^4(1+u)}{1-u^3}[(1-u^2)^r-1]
\end{split}
\end{equation*}

Combining the second term of this with (\ref{column1}), we get 
\begin{equation*}
\begin{split}
& -\frac{u^4(1+u)}{1-u^3}[(1-u)^r(1+u)^r-1 -(1-u)^r +1] \\
& = -\frac{u^4(1+u)}{1-u^3}[(1+u)^r-1](1-u)^r . 
\end{split}
\end{equation*}

Hence, (\ref{withneg}) becomes 
\begin{equation*}
\begin{split}
& \frac{1}{(1-u^2)^{r+1}}\left[ \frac{u^3}{1-u^3}[(1+u^2)^r-1](1-u^2)^r -
\frac{u^4(1+u)}{1-u^3}[(1+u)^r-1](1-u)^r \right] \\
& = \frac{u^3}{1-u^3} \left[ \frac{(1+u^2)^r-1}{1-u^2} -\frac{1}{(1-u^2)^{r+1}}u(1+u)[(1+u)^r-1](1-u)^r \right] \\
& = \frac{u^3}{1-u^3} \left[ \frac{(1+u^2)^r-1}{1-u^2}  -\frac{u[(1+u)^r-1]}{(1+u)^r(1-u)}
\right].
\end{split}
\end{equation*}

If we put this back into $t$, we get that (\ref{bgmain}) is 
\begin{equation}
-\frac{t^3}{1+t^3} \left[ \frac{(1+t^2)^r-1}{1-t^2} + \frac{t[(1-t)^r-1]}{(1-t)^r(1+t)} \right]. 
\label{bganswer}
\end{equation}
\vspace{5mm}

By Proposition \ref{pbh}, part \ref{pbhi3}, the coefficient of the $H$-summand of the
Borel homology spectrum $k\wedge B(\Z/2)_+^r$ is
\begin{equation}
\frac{1}{(1-w^2)^2(1-w)^{r-1}}\left( \frac{1-(1+w^2)^r(1-w)^r}{1-(1+w^2)(1-w)} - 
\frac{1-(1-w)^r}{1-(1-w)} \right)  . 
\label{ours}
\end{equation}
(We continue using $w$ instead of $t$ for the formula in our paper, to keep the notation 
distinct from that of Bruner-Greenlees.) 

The Bruner-Greenlees formula needs to be applied with what 
they call $Start(2)$, i.~e. shift so that the lowest term is $t^2$, which matches what we get from formula (\ref{ours}). 

In fact, in Bruner-Greenlees \cite{bg}, proof of Lemma 4.8.3, they compute explicitly 
\[ (1-t^2)^r[T_i] = {r \choose i}t^{i+4} -{r \choose i+1}(t^{i+6} + t^{i+8}) + 
{r \choose i+2}(t^{i+8} + t^{i+10} + t^{i+12}) - \cdots \]
So the lowest term in $[T_2] + \cdots [T_r]$ is always ${r \choose 2} t^6$ from $[T_2]$. Hence, $Start(2)$ on 
it is just multiplying by $t^{-4}$. Applying this to (\ref{bganswer}), we get 
\begin{equation}
\frac{-1}{t(1+t^3)} \left[ \frac{(1+t^2)^r-1}{1-t^2} + \frac{t[(1-t)^r-1]}{(1-t)^r(1+t)} \right]. 
\label{shifted}
\end{equation}
\vspace{4mm}

\begin{lemma}
Formulas (\ref{ours}) and (\ref{shifted}) agree (up to replacing $t$ by $w$).
\end{lemma}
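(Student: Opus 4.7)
The plan is to verify the equality of (\ref{ours}) and (\ref{shifted}) by direct algebraic manipulation. The expressions look quite different, but each is a rational function built from the same basic ``atoms'' $(1+w^2)^r$, $(1-w)^r$, $(1-w^2)^r$, so after clearing denominators the comparison should reduce to a short polynomial identity.

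First I would simplify the denominators in (\ref{ours}). The key observation is
$$1 - (1+w^2)(1-w) \;=\; w - w^2 + w^3 \;=\; w(1 - w + w^2) \;=\; \frac{w(1+w^3)}{1+w},$$
and $1-(1-w) = w$. Using these, (\ref{ours}) becomes
$$\frac{(1 - (1+w^2)^r(1-w)^r)(1+w) \;-\; (1-(1-w)^r)(1+w^3)}{w\,(1-w)^{r+1}(1+w)^2\,(1+w^3)}.$$
Meanwhile, combining the two terms inside the bracket of (\ref{shifted}) over the common denominator $(1-w)^r(1+w)$ and pulling the overall $-1/(w(1+w^3))$ outside yields
$$\frac{-((1+w^2)^r-1)(1-w)^{r-1} \;-\; w((1-w)^r-1)}{w\,(1-w)^r(1+w)\,(1+w^3)}.$$

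Next I would place both expressions over the common denominator $w(1-w)^{r+1}(1+w)^2(1+w^3)$; this requires multiplying numerator and denominator of the second expression by $(1-w)(1+w) = 1-w^2$. So it suffices to verify the polynomial identity
$$A \;=\; B\,(1-w^2),$$
where
$$A \;=\; (1 - (1+w^2)^r(1-w)^r)(1+w) - (1-(1-w)^r)(1+w^3),$$
$$B \;=\; -((1+w^2)^r-1)(1-w)^{r-1} - w((1-w)^r-1).$$

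Expanding both sides, the crucial identity is $(1+w)(1-w+w^2) = 1+w^3$, which appears when one computes $(1-w)^r(1+w)\bigl[1 - w(1-w)\bigr] = (1-w)^r(1+w^3)$ on the right-hand side. Using this, both $A$ and $B(1-w^2)$ reduce to
$$w(1-w)(1+w) \;-\; (1+w^2)^r(1-w)^r(1+w) \;+\; (1-w)^r(1+w^3),$$
finishing the verification.

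The main obstacle, such as it is, is purely bookkeeping: keeping track of which term in (\ref{shifted}) contributes the $w(1-w)(1+w)$ that matches the ``constant-in-$r$'' piece $(1+w) - (1+w^3)$ coming out of (\ref{ours}). Once the factorization $(1+w)(1-w+w^2) = 1+w^3$ is used, everything lines up term by term, and no further nontrivial combinatorics is needed.
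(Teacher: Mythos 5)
Your proposal is correct: the reduction to the polynomial identity $A=B(1-w^2)$ is valid, and the key step $(1-w)^r(1+w)\bigl[1-w(1-w)\bigr]=(1-w)^r(1+w^3)$ checks out. This is essentially the same argument as the paper's, which likewise simplifies the denominators via $1-(1+w^2)(1-w)=w(1-w+w^2)$ and $1+t^3=(1+t)(1-t+t^2)$ and then verifies the resulting polynomial identity by direct expansion, differing only in bookkeeping (the paper compares numerators over $(1-t^2)^2(1-t)^{r-1}$ rather than clearing to one common denominator).
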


\begin{proof}
To show this, note that (\ref{shifted}) is 
\begin{equation*}
\begin{split}
& \frac{-1}{t(1+t^3)}
\left[ \frac{(1+t^2)^r-1}{1-t^2} + \frac{t[(1-t)^r-1]}{(1-t^2)(1-t)^{r-1}} \right] \\
& = \frac{-1}{t(1+t^3)} \left[ \frac{[(1+t^2)^r-1](1-t)^{r-1} + t[(1-t)^r-1]}{(1-t^2)(1-t)^{r-1}} \right] \\
& = \frac{-1}{t(1-t+t^2)(1+t)} \left[ \frac{[(1+t^2)^r-1](1-t)^{r-1} + t[(1-t)^r-1]}{(1-t^2)(1-t)^{r-1}} \right] \\
& = \frac{-1}{t(1-t+t^2)}  \left[ \frac{[(1+t^2)^r-1](1-t)^{r-1} + t[(1-t)^r-1]}{(1-t^2)^2(1-t)^{r-2}} \right] \\
& = \frac{-1}{t(1-t+t^2)}  \left[ \frac{[(1+t^2)^r-1](1-t)^{r} + t(1-t)[(1-t)^r-1]}{(1-t^2)^2(1-t)^{r-1}} \right].
\end{split}
\end{equation*}

On the other hand, for our formula (\ref{ours}), let $a = 1+w^2$, $b = 1-w$. The part inside the brackets of (\ref{ours}) is 
\[ \frac{1-a^rb^r}{1-ab} - \frac{1-b^r}{1-b} = \frac{(1-a^rb^r)(1-b) -(1-ab)(1-b^r)}{(1-ab)(1-b)} . \]
The denominator is 
\begin{equation*}
\begin{split}
& 1-(1+w^2)(1-w) -(1-w) +(1+w^2)(1-w)^2 \\
& = (1+w^2)[(1-w)^2-(1-w)] + w  \\
& = w^4 -w^3 + w^2 \\
& = w^2(1-w+w^2) . 
\end{split}
\end{equation*}

Hence, (\ref{ours}) becomes 
\begin{equation}
\frac{1}{(1-w^2)^2(1-w)^{r-1}}\left( \frac{(1-a^rb^r)(1-b) -(1-ab)(1-b^r)}{w^2(1-w+w^2)}  \right)  . 
\label{ourschanged}
\end{equation}
Comparing this to what we get from (\ref{shifted}) above, it suffices to show that
\begin{equation}
\frac{(1-a^rb^r)(1-b) -(1-ab)(1-b^r)}{w}
\label{compare1}
\end{equation}
is the same as 
\begin{equation}
\begin{split}
& - \left[ [(1+t^2)^r-1](1-t)^r +t(1-t)[(1-t)^r-1] \right] \\
& = -[(1+t^2)^r(1-t)^r-(1-t)^r +t(1-t)[(1-t)^r-1] ]\\
& = -(1+t^2)^r(1-t)^r +(1-t)^r -t(1-t)^{r+1} + t(1-t) . 
\end{split}
\label{compare2}
\end{equation}

We have that the numerator of (\ref{compare1}) is 
\begin{equation*}
\begin{split}
& 1-a^rb^r-b + a^r b^{r+1} -(1-ab-b^r+ab^{r+1}) \\
& = -a^r b^r -b + a^r b^{r+1}+ab + b^r-ab^{r+1} \\
& = a^r(b^{r+1}-b^r) + (a-1)b +b^r(1-ab) . 
\end{split}
\end{equation*}

The first term of this is 
\begin{equation} 
a^r(b^{r+1}-b^r) = a^rb^r (b-1) = -w(1+w^2)^r (1-w)^r . 
\label{term1}
\end{equation}
The second term is 
\begin{equation}
(a-1)b = w^2(1-w) . 
\label{term2}
\end{equation}
The third term is 
\begin{equation}
\begin{split}
b^r(1-ab) & = (1-w)^r[1-(1+w^2)(1-w)] \\
& = (1-w)^r[w-w^2  + w^3] \\
& = w(1-w)^r -w^2(1-w)^{r+1} . 
\end{split}
\label{term3}
\end{equation}

Taking the sum of (\ref{term1}), (\ref{term2}), (\ref{term3}) and dividing by $w$, we precisely get (\ref{compare2}) when we equate $w$ and $t$. 
\end{proof}

\vspace{10mm}


\begin{thebibliography}{99}

\bibitem{as} M.F.Atiyah, G.B.Segal: Equivariant K-theory and completion, {\em J. Diff. Geom.} 3 (1969) 1-18

\bibitem{bg} R.R.Bruner, J.P.C.Greenlees: {\em The connective K-theory of finite groups}, 
Mem. Amer. Math. Soc. 165 (2003), no. 785, AMS, Providence, RI

\bibitem{carlsson} G.Carlsson: Equivariant stable homotopy and Segal's Burnside ring conjecture, {\em Ann. of Math.}
120 (1984) 189-224

\bibitem{dm} D.M.Davis, M.Mahowald: The spectrum $(P\wedge bo)_{-\infty}$, {\em Math. Proc. 
Cambridge Phil. Soc.} 96 (1984) 85-93

\bibitem{ekmm} A. Elmendorf, I. Kriz, M. Mandell, P. May: 
{\em Rings, modules and algebras in stable homotopy theory}, 
AMS Mathematical Surveys and Monographs, Volume 47, 1997

\bibitem{gm} J.P.C.Greenlees, J.P.May: {\em Generalized Tate cohomology}, Memoirs AMS 543, 1995

\bibitem{gmc} J.P.C.Greenlees, J.P.May: Localization and completion theorems for $MU$-module spectra,
{\em Ann. Math.} 146 (1997) 509-544

\bibitem{hhr} M. A. Hill, M. J. Hopkins, D. C. Ravenel:  
On the nonexistence of elements of Kervaire invariant one, 
{\em Annals of Mathematics} Volume 184 (2016), Issue 1, 1--262

\bibitem{real} P. Hu, I. Kriz: Real-oriented homotopy theory and an analogue of the 
Adams-Novikov spectral sequence, {\em Topology} 40(2) (2001) 317--399

\bibitem{lms} L. G. Lewis, Jr., J. P. May, M. Steinberger, and J. E. McClure: 
{\em Equivariant stable homotopy theory}, Vol 1213 of Lecture Notes in Mathematics,
Springer-Verlag, Berlin, 1986.

\bibitem{scholze} T.Nikolaus, P.Scholze: On topological cyclic homology, arXiv: 1707.01799

\bibitem{serre} J.-P.Serre: {\em Cohomologie Galoisienne}, Lecture Notes in Math. 5, Springer Verlag, 1994

\end{thebibliography}
\end{document}